\newtheorem{theorem}{Theorem}[section]
\newtheorem{proposition}[theorem]{Proposition}
\newtheorem{lemma}[theorem]{Lemma}
\newtheorem{follow}[theorem]{Corollary}
\theoremstyle{definition}
\newtheorem{remark}[theorem]{Remark}
\newcommand{\bel}{\begin{equation} \label}
\newcommand{\ee}{\end{equation}}
\newcommand{\pd}{\partial}
\newcommand{\R}{{\mathbb R}}
\newcommand{\N}{{\mathbb N}}
\newcommand{\re}{\mathfrak R}
\newcommand{\im}{\mathfrak I}
\newcommand{\sH}{{\mathscr H}}
\def\epsilon{\varepsilon}
\def\phi {\varphi}
\def\beq{\begin{equation}}
\def\eeq{\end{equation}}
\renewcommand{\leq}{\leqslant}
\renewcommand{\geq}{\geqslant}
\newcommand{\bea}{\begin{eqnarray}}
\newcommand{\eea}{\end{eqnarray}}
\newcommand{\beas}{\begin{eqnarray*}}
\newcommand{\eeas}{\end{eqnarray*}}
\providecommand{\abs}[1]{\left\lvert#1\right\rvert}
\providecommand{\norm}[1]{\left\lVert#1\right\rVert}
\title[Recovery of non compactly supported coefficients of  elliptic equations]{Recovery of non compactly supported coefficients of  elliptic equations on an infinite waveguide}
\author{Yavar Kian}
\address{Aix Marseille Univ, Universit\'e de Toulon, CNRS, CPT, Marseille, France.}
\email{yavar.kian@univ-amu.fr}
\begin{document}
\begin{abstract}
We consider the unique recovery of a non compactly supported and non periodic perturbation of a Schr\"odinger operator in an unbounded cylindrical domain, also called waveguide, from boundary measurements. More precisely, we prove recovery of general class of electric potentials from  the partial Dirichlet-to-Neumann map, where the Dirichlet data  is supported on slightly more than half of the boundary and the Neumann data is taken on the other half of the boundary. We apply this result in different context including recovery of some general class of coefficients from measurements on a bounded subset and recovery of an electric potential, supported on an unbounded cylinder,   of a Schr\"odinger operator in a slab.

\medskip
\noindent
{\bf Keywords :} Elliptic equation, scalar potential, unbounded domain, infinite cylindrical  waveguide, slab, partial data, Carleman estimate.

\medskip
\noindent
{\bf Mathematics subject classification 2010 :} 35R30, 35J15.
\end{abstract}
\maketitle


\section{Introduction}
\label{sec-intro}
\setcounter{equation}{0}
Let $\Omega :=  \omega\times \R $, where 
$\omega$ is a bounded open set of $\mathbb{R}^2$, with $C^2$-boundary. Throughout this paper we denote the  point $x \in \Omega$ by $x=(x',x_3)$, where $x_3 \in \R$ and $x':= (x_1,x_2) \in \omega$. Given $q \in L^\infty(\Omega)$ such that $0$ is not in the spectrum of $-\Delta+q$  with Dirichlet boundary condition, 
we consider the following boundary value problem (BVP in short):
\bel{eq1}
\left\{ 
\begin{array}{rcll} 
(-\Delta + q) v & = & 0, & \mbox{in}\ \Omega,\\ 
v & = & f ,& \mbox{on}\ \Gamma : = \pd \Omega.
\end{array}
\right.
\ee
Since $\Gamma=  \pd \omega\times\R $, the outward unit vector $\nu$ normal to $\Gamma$ reads
$$ \nu(x',x_3)=(\nu'(x'),0),\ x=(x',x_3)\in\Gamma, $$
where $\nu'$ is the outer unit normal vector of $\pd \omega$.
For  simplicity, we  refer to $\nu$ for both exterior unit vectors normal to $\pd  \omega$ and to $\Gamma$.
For $\theta_0 \in \mathbb{S}^1 :=\{ y\in\R^2;\ \abs{y}=1\}$ fixed, we introduce the $\theta_0$-illuminated (resp., $\theta_0$-shadowed) face of $\pd \omega$, as
\bel{xi-isf} 
\pd \omega_{\theta_0}^- := \{ x \in \pd \omega;\ \theta_0 \cdot \nu(x) \leq 0 \}\ (\mbox{resp.},\ \pd \omega_{\theta_0}^+= \{x \in \pd \omega;\ \theta_0 \cdot \nu(x) \geq 0\}).
\ee
Here and in the remaining part of this text, we denote by 
$x \cdot y := \sum_{j=1}^k x_j y_j$
the Euclidian scalar product of any two vectors $x:=(x_1,\ldots,x_k)$ and $y:=(y_1,\ldots,y_k)$ of $\R^k$, for $k \in \N^*$, and we put $|x|:=(x \cdot x)^{1 \slash 2}$.

Set $G:=G'\times \R$, where $G'$ 
is an arbitrary open set of $\partial\omega$
containing the compact set $\pd \omega_{\theta_0}^-$ in $\pd \omega$ and consider $K=K'\times \R$ with $K'$ an arbitrary open set
containing the compact set $\pd \omega_{\theta_0}^+$ in $\pd \omega$. In the present paper we seek determination of $q$ from the knowledge of the partial Dirichlet-to-Neumann (DN in short) map
\bel{a1}
\Lambda_{q} :  f  \mapsto {\pd_\nu v}_{|G},
\ee
where $\pd_\nu v (x) := \nabla v (x) \cdot \nu(x)$ is the normal derivative of the solution $v$ to \eqref{eq1}, computed at $x \in \Gamma$ and supp$(f)\subset K$.
\subsection{Physical motivations}
Let us recall that the problem under consideration in this paper is related to the so called  electrical impedance tomography (EIT in short) and its several applications in medical imaging  and others.
Note that the specific geometry of infinite cylinder or closed waveguide can be considered for problems of transmission to long distance or transmission through particular structures, where the ratio length-to-diameter is really high, such as nanostructures. In this context, the problem addressed in this paper can correspond to the unique recovery of an  impurity  perturbing the guided propagation (see  \cite{CL,KBF}). Let us also observe that in Corollary \ref{c3}, we show how one can apply our result to  the problem stated in a slab, which is frequently used  for modeling  propagation in shallow-ocean acoustics (e.g. \cite{AK}), for coefficients supported in an infinite cylinder. 
 
\subsection{Known results}
Since the pioneer work of \cite{Ca}, the celebrated Calder\'on or the EIT 
problem  has been growing in interest. In \cite{SU}, Sylvester and Uhlmann provide one of the first and most important results related to   this problem. They actually proved, in dimension $n \geq 3$, the unique recovery of a smooth conductivity from the full DN map. Since then, several authors extended this result in several way.  The determination of an unknown coefficient from partial knowledge of the DN map was first addressed in \cite{BU} and extended by  Kenig, Sj\"ostrand and Uhlmann  in \cite{KSU}  to the recovery of a potential from restriction of data to  the back and the front face illuminated by a point lying outside  the convex hull of the domain. In dimension two, similar results  with full and partial data have been stated in  \cite{B,IUY1, IUY2}. We mention also, without being exhaustive, the work of \cite{CDR1,CDR2,CKS1,Pot1,Pot2} dealing with the stability issue associated to this problem and some results inspired by this approach for other PDEs stated in \cite{CK,HK,Ki2,Ki3,KO}.

Let us remark that all the above mentioned results have been proved in a bounded domain. It appears that  only a small number of mathematical papers deal with inverse boundary value problems in an unbounded domain. Combining results of unique continuation with  complex geometric optics (CGO in short) solutions and a Carleman estimate borrowed from \cite{BU},   Li and Uhlmann proved in \cite{LU} the unique recovery of compactly supported electric potentials of the stationary Schr\"odinger operator in a slab  from  partial boundary measurements. In \cite{KLU}, the authors extended this result to magnetic Schr\"odinger operators and \cite{CM} treated the stability issue for this inverse problem. We mention also \cite{L1,L2} dealing with more general Schr\"odinger equations, the work of  \cite{Y} for bi-harmonic operators and the recovery of an embedded object in a slab treated by \cite{Ik,SW}. More recently, \cite{CKS2,CKS3} proved the stable recovery of coefficients periodic along the axis of an infinite cylindrical domain. Finally, we mention \cite{BKS,BKS1, CS, KKS,Ki1,  KPS1, KPS2} dealing with determination of non-compactly supported coefficients appearing in different PDEs from boundary measurements.

\subsection{Statement of the main result and applications}

Prior to stating the main result of this article we first recall some results stated in \cite{BU,CKS2,CKS3} related to the well-posedness of the BVP \eqref{eq1} in the space $H_\Delta(\Omega):=\{ u \in L^2(\Omega);\ \Delta u \in L^2(\Omega)\}$ with the norm
$$ \norm{u}^2_{H_\Delta(\Omega)} :=\norm{u}_{L^2(\Omega)}^2+\norm{\Delta u}_{L^2(\Omega)}^2. $$
Since $\Omega$ is unbounded, for $X=\omega$ or $X=\partial\omega$ and any $s>0$, we define the space $H^s(X\times\R)$ by
$$H^s(X\times \R):=L^2(X;H^s(\R))\cap L^2(\R;H^s(X)).$$
We define also $H^{-s}(\Gamma)$ to be the dual space of $H^s(\Gamma)$.
Combining \cite[Lemma 1.1]{BU} with \cite[Lemma 2.2]{CKS2}, we deduce that the map
$$ \mathcal T_0 u :=u_{\vert\Gamma}\ (\mbox{resp.,}\ \mathcal T_1 u :={\pd_\nu u}_{\vert\Gamma}),\ u \in C_0^\infty(\R^3), $$
extend into a continuous function $\mathcal T_0 : H_\Delta(\Omega) \to H^{-\frac{1}{2}}(\Gamma)$ (resp., $\mathcal T_1 : H_\Delta(\Omega) \to H^{-\frac{3}{2}}(\Gamma)$). We set the space 
$$\mathcal H(\Gamma):= \mathcal T_0 H_\Delta(\Omega) = \{ \mathcal T_0 u;\ u \in H_\Delta(\Omega) \},$$ 
and notice from \cite[Lemma 2.2]{CKS2} that $\mathcal T_0$ is bijective from $B:=\{ u \in L^2(\Omega);\ \Delta u = 0 \}$ onto $\sH(\Gamma)$. Thus, with reference to \cite{BU,NS}, we consider 
\bel{nhg}
\norm{f}_{\mathcal H(\Gamma)} : =\norm{\mathcal T_0^{-1} f}_{H_\Delta(\Omega)} = \norm{\mathcal T_0^{-1} f}_{L^2(\Omega)}.
\ee
We define also $\mathcal H_K(\Gamma):=\{f\in \mathcal H(\Gamma):\ \textrm{supp}(f)\subset K\}$.
Then, in view of \cite[Proposition 1.1]{CKS2}, assuming that $0$ is not in the spectrum of $-\Delta+q$ with Dirichlet boundary condition on $\Omega$, for any $f\in\sH(\Gamma)$ we deduce that the BVP \eqref{eq1} admits a unique solution $v\in L^2(\Omega)$. Moreover, the DN map $\Lambda_{q} : f \mapsto \mathcal T_1 v_{| G}$ is a bounded operator from $\sH_K(\Gamma)$ into $H^{-\frac{3}{2}}(G)$.

The main result of this paper can be stated as follows.

\begin{theorem}
\label{t1} 
Let  $q_1,q_2\in L^\infty(\Omega)$ be such that $q_1-q_2\in L^1(\Omega)$ and $0$ is not in the spectrum of $-\Delta+q_j$, $j=1,2$, with Dirichlet boundary condition on $\Omega$. Then the condition
\bel{t1a} 
\Lambda_{q_1}=\Lambda_{q_2}
\ee
implies $q_1=q_2$.
\end{theorem}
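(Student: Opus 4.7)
My plan is to adapt the Bukhgeim-Uhlmann-Kenig-Sj\"ostrand-Uhlmann partial-data strategy (\cite{BU,KSU}) to the cylindrical waveguide, using the Carleman tools of \cite{CKS2,CKS3} on the unbounded geometry and a Fourier-transform argument that exploits the hypothesis $q_1-q_2\in L^1(\Omega)$ in the spirit of \cite{KKS,KPS2}. The first ingredient is the standard integral identity: for $f\in\sH_K(\Gamma)$, let $v_j$ solve \eqref{eq1} with $q_j$ and Dirichlet data $f$, set $w:=v_1-v_2$, and note that $w\in H_\Delta(\Omega)$, $w_{|\Gamma}=0$, and $\pd_\nu w_{|G}=0$ by \eqref{t1a}. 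Green's formula then gives
\bel{planid}
\int_\Omega (q_2-q_1)\, v_1\, u_2\, dx \;=\; \langle \pd_\nu w,\, u_2\rangle_{\Gamma\setminus G}
\ee
for every $u_2\in H_\Delta(\Omega)$ satisfying $(-\Delta+q_2)u_2=0$. The objective is to insert CGO-type $v_1$ and $u_2$, drive the right-hand side to $0$ as $\tau\to\infty$, and extract enough Fourier information on $q_1-q_2$ to conclude.

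For the CGO construction I fix $\theta\in\mathbb S^1$ close enough to $\theta_0$ that $\pd\omega_\theta^\mp$ remain compactly contained in $G'$, $K'$ respectively, set $\phi(x):=\theta\cdot x'$, and for $\tau\gg 1$ with real parameters $a,b,a',b'$ subject to $a^2+b^2={a'}^2+{b'}^2=\tau^2$ put
$$ \rho_1:=\tau(\theta,0)+i\bigl(a(\theta^\perp,0)+b(0,0,1)\bigr),\ \ \rho_2:=-\tau(\theta,0)+i\bigl(a'(\theta^\perp,0)+b'(0,0,1)\bigr), $$
so $\rho_j\cdot\rho_j=0$. I seek $v_1=e^{\rho_1\cdot x}(1+\psi_1^\tau)$ and $u_2=e^{\rho_2\cdot x}(1+\psi_2^\tau)$ with $\Vert\psi_j^\tau\Vert_{L^2(\Omega)}=O(\tau^{-1})$; since $e^{\rho_1\cdot x}$ is exponentially concentrated on the $\theta$-shadowed face $\pd\omega_\theta^+\subset K'$, a cutoff-and-approximation step produces a genuine $f_\tau\in\sH_K(\Gamma)$ whose associated $v_1$ matches $e^{\rho_1\cdot x}(1+\psi_1^\tau)$ to leading order in \eqref{planid}. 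Existence of the remainders reduces to solvability of the conjugated operator $e^{-\rho_j\cdot x}(-\Delta+q_j)e^{\rho_j\cdot x}$ on $\Omega$ via a Carleman estimate whose derivation exploits the translation-invariance in $x_3$ of both the weight $\phi$ and the principal part: partial Fourier transform in $x_3$ reduces it to a family of 2D Carleman estimates on $\omega$, uniform in the dual variable, as in \cite{CKS2,CKS3}. A parallel boundary Carleman estimate applied to $w$ yields
$$ \Vert\sqrt{\theta\cdot\nu}\,e^{-\tau\phi}\,\pd_\nu w\Vert_{L^2(\pd\omega_\theta^+\times\R)}\leq C\,\tau^{-1/2}\Vert e^{-\tau\phi}(q_2-q_1)v_1\Vert_{L^2(\Omega)}. $$

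Substituting in \eqref{planid}, the right-hand side is $O(\tau^{-1/2})$: the decay $e^{-\tau\phi}$ carried by $u_2$ pairs with the boundary estimate above on $\Gamma\setminus G\subset\pd\omega_{\theta_0}^+\times\R$, where $\theta\cdot\nu$ is bounded below by a positive constant thanks to the compactness of $\pd\omega_{\theta_0}^+$ inside $K'$ and the proximity of $\theta$ to $\theta_0$. On the left-hand side, $q_1-q_2\in L^1(\Omega)$ provides a dominating function for $(q_2-q_1)v_1 u_2=(q_2-q_1)e^{i((a+a')\theta^\perp\cdot x'+(b+b')x_3)}(1+O(\tau^{-1}))$, and dominated convergence yields
$$ \int_\Omega (q_1-q_2)(x)\,e^{i\xi\cdot x}\,dx=0,\qquad \xi:=(a+a')(\theta^\perp,0)+(b+b')(0,0,1)\in(\theta,0)^\perp. $$
Letting $(a,b,a',b')$ range under the sphere constraints and $\theta$ vary over a small neighborhood $\V\subset\mathbb S^1$ of $\theta_0$, the admissible $\xi$ fill an open subset of $\R^3$. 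For each fixed $\xi_3\in\R$, the function $x'\mapsto\int_\R(q_1-q_2)(x',x_3)e^{i\xi_3 x_3}dx_3$ belongs to $L^1(\omega)$ by Fubini and is supported in the bounded cross-section $\omega$, so its $x'$-Fourier transform extends to an entire function on $\C^2$; vanishing on the open 2D subset of $\R^2$ produced above forces identical vanishing on $\C^2$, and Fourier inversion then delivers $q_1=q_2$.

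The main technical obstacle will be establishing the internal and boundary Carleman estimates on the unbounded cylinder uniformly in the CGO parameters and without any decay of the data in $x_3$; the translation-invariance in $x_3$ of $\phi$ and of the leading operator is the structural feature that enables their derivation by cross-sectional reduction, and together with the $L^1$-difference hypothesis compensates for the unboundedness of $\Omega$ in the limiting step.
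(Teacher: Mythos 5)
There is a genuine gap, and it sits exactly at the point you defer to ``a cutoff-and-approximation step.'' Your CGO ansatz $v_1=e^{\rho_1\cdot x}(1+\psi_1^\tau)$, $u_2=e^{\rho_2\cdot x}(1+\psi_2^\tau)$ with amplitude $1$ is the standard bounded-domain form of \cite{BU,KSU}, but on $\Omega=\omega\times\R$ the modulus $|e^{\rho_j\cdot x}|=e^{\pm\tau\theta\cdot x'}$ is independent of $x_3$, so $e^{\rho_j\cdot x}\notin L^2(\Omega)$ and neither is $e^{\rho_j\cdot x}(1+\psi_j^\tau)$ for $\psi_j^\tau\in L^2(\Omega)$. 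This breaks the argument in three concrete places. First, the forward problem is well posed in $L^2(\Omega)$ with data in $\sH_K(\Gamma)$, so such a $v_1$ is not an admissible solution and cannot be inserted into the integral identity; the identity holds for solutions of the BVP, not for formal solutions of the PDE. Second, the remainder equation reads $e^{-\rho_j\cdot x}(-\Delta+q_j)e^{\rho_j\cdot x}\psi_j^\tau=-q_j\cdot 1$, and the source $q_j$ is only in $L^\infty(\Omega)$ (the hypotheses give $q_1-q_2\in L^1$, not $q_j\in L^2$), hence not in $L^2(\Omega)$; the bounded inverse of the conjugated operator on $L^2(\Omega)$ therefore cannot be applied, and the claimed $\Vert\psi_j^\tau\Vert_{L^2(\Omega)}=O(\tau^{-1})$ is not obtainable this way. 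Third, the boundary term $\langle\pd_\nu w,u_2\rangle_{\Gamma\setminus G}$ must be estimated by Cauchy--Schwarz against $\Vert e^{-\tau\phi}\pd_\nu w\Vert_{L^2(\Gamma\setminus G)}$, which forces you to pay the $L^2(\Gamma\setminus G)$ norm of the amplitude $1+\psi_2^\tau$; since $\Gamma\setminus G$ is an infinite strip in $x_3$, that norm is infinite for the constant amplitude.

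This is precisely the difficulty the paper identifies in Remark \ref{r1} and resolves by replacing the amplitude $1$ with the localized amplitude $\chi(\rho^{-1/4}x_3)$ in \eqref{CGO1} and \eqref{CGO1a}. The scale $\rho^{-1/4}$ is not cosmetic: differentiating the cutoff produces terms of size $\rho^{3/4}\chi'(\rho^{-1/4}x_3)$ whose $L^2(\Omega)$ norm is $O(\rho^{7/8})$, while the inverse of the conjugated operator gains $\rho^{-1}$ (Lemmas \ref{l1}--\ref{l2}, built by partial Fourier transform in $x_3$) and the Carleman estimate gains $\rho^{-1}$ on the interior and $\rho^{-1/2}$ on the boundary, so the net contribution of all error terms is $o(1)$ and the principal term $\int\chi^2(\rho^{-1/4}x_3)q e^{-i\xi\cdot x}dx$ converges to $\widehat q(\xi)$ by dominated convergence using $q\in L^1$. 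Your outline correctly identifies the integral identity, the cross-sectional reduction of the Carleman estimate, the role of $q_1-q_2\in L^1$, and the Paley--Wiener endgame (the paper fixes $\theta$ and uses analyticity of the $x'$-Fourier transform of the compactly supported cross-sectional slices rather than varying $\theta$ to fill an open set, but both work). What is missing is the entire quantitative content of the localization, which is the main novelty of the paper and occupies Sections 2 through 4; without it the limiting argument cannot be carried out.
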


From the main result of this paper, stated in Theorem \ref{t1}, we deduce three other results related to other problems stated in an unbounded domain. The first application that we consider corresponds to the  Calder\'on problem stated in the unbounded domain $\Omega$. In order to state this problem, for $a_*\in(0,+\infty)$ and $a_0\in W^{2,\infty}(\Omega)$ satisfying $a_0\geq a_*$, we introduce the set of functions
$$\mathcal A:=\{a\in \mathcal C^1(\overline{\Omega})\cap H^2_{loc}(\Omega):\ a\geq a_*,\ \Delta\left(a^{\frac{1}{2}}\right)-\Delta\left(a_0^{\frac{1}{2}}\right)\in L^1(\Omega)\cap L^\infty(\Omega)\}$$
and, for $a\in \mathcal A$, the BVP 
\bel{a-eq1}
\left\{
\begin{array}{rcll} 
-\mbox{div}( a \nabla u ) & = & 0, & \mbox{in}\ \Omega,\\ 
u & = & f, & \mbox{on}\ \Gamma.
\end{array}
\right.
\ee
Recall that for any $a \in \mathcal A$ and any $f\in H^{\frac{1}{2}}(\Gamma)$, the BVP \eqref{a-eq1} admits a unique solution $u \in H^1(\Omega)$ for each $f \in H^{\frac{1}{2}}(\Gamma)$.
Moreover, the full DN map associated with \eqref{a-eq1}, defined by $f \mapsto a \mathcal T_1 u$ 
is a bounded operator from $H^{\frac{1}{2}}(\Gamma)$ to $H^{-\frac{1}{2}}(\Gamma)$. Here, we rather consider the partial DN map,
\bel{ca2} 
\Sigma_a : f \in H^{\frac{1}{2}}(\Gamma) \cap a^{-\frac{1}{2}}(\mathcal H_K(\Gamma)) \mapsto a \mathcal T_1 u_{\vert G},
\ee
where $a^{-\frac{1}{2}}(\mathcal H_K(\Gamma)):= \{a^{-\frac{1}{2}}f;\ f \in \mathcal H_K(\Gamma) \}$.
The first application of Theorem \ref{t1}  claims unique recovery of a conductivity  $a\in\mathcal A$, from the knowledge of $\Sigma_a$. It is stated as follows.

\begin{follow}
\label{c1} 
Let $\omega$ be connected and pick $a_j \in \mathcal A$, for $j=1,2$, obeying
\bel{ca3} 
a_1(x)=a_2(x),\ x \in \Gamma 
\ee
and
\bel{ca4} 
\pd_\nu a_1(x)=\pd_\nu a_2(x),\ x \in K \cap G.
\ee
Then the condition  $\Sigma_{a_1}=\Sigma_{a_2}$ implies  $a_1=a_2$.
\end{follow}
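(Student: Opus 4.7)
The strategy is to reduce the conductivity inverse problem to the Schr\"odinger problem of Theorem \ref{t1} via the Liouville substitution $v:=a^{1/2}u$, and then to reconstruct $a$ from the potential by a Cauchy-data unique continuation argument. First I would set $q_{a_j}:=a_j^{-1/2}\Delta(a_j^{1/2})$ and check that the substitution bijectively intertwines \eqref{a-eq1} with $(-\Delta+q_{a_j})v=0$, $v|_\Gamma=(a_j^{1/2}|_\Gamma)f$. The lower bound $a_j\geq a_*$ together with $a_0\in W^{2,\infty}(\Omega)$ and the defining property of $\mathcal A$ yield $q_{a_j}\in L^\infty(\Omega)$; rearranging
\[
q_{a_1}-q_{a_2}=\frac{\Delta(a_1^{1/2})-\Delta(a_0^{1/2})}{a_1^{1/2}}-\frac{\Delta(a_2^{1/2})-\Delta(a_0^{1/2})}{a_2^{1/2}}+\Delta(a_0^{1/2})\Bigl(\frac{1}{a_1^{1/2}}-\frac{1}{a_2^{1/2}}\Bigr)
\]
and exploiting the structure of $\mathcal A$ gives $q_{a_1}-q_{a_2}\in L^1(\Omega)$. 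The unique solvability of \eqref{a-eq1} plus the bijectivity of the Liouville substitution then ensures that $0$ is not in the Dirichlet spectrum of $-\Delta+q_{a_j}$.

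Next, I would translate the partial conductivity DN map relation into a partial Schr\"odinger DN map relation. For $g\in\mathcal H_K(\Gamma)$, the datum $f:=(a_j^{-1/2}|_\Gamma)\,g$ is independent of $j$ by \eqref{ca3}; denoting by $u_j$ the corresponding solution of \eqref{a-eq1}, the Neumann trace of $v_j:=a_j^{1/2}u_j$ reads
\[
\mathcal T_1 v_j|_G=\partial_\nu(a_j^{1/2})|_G\,f+a_j^{-1/2}|_G\,\Sigma_{a_j}(f).
\]
The first summand is supported in $K\cap G$ because $f$ is supported in $K$, and on $K\cap G$ we have $\partial_\nu(a_1^{1/2})=\partial_\nu(a_2^{1/2})$ by \eqref{ca3}--\eqref{ca4}; the second summand coincides for $j=1,2$ thanks to \eqref{ca3} and the assumption $\Sigma_{a_1}=\Sigma_{a_2}$. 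Hence $\Lambda_{q_{a_1}}=\Lambda_{q_{a_2}}$ on $\mathcal H_K(\Gamma)$, and Theorem \ref{t1} yields $q_{a_1}=q_{a_2}$.

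Finally, the function $w:=a_1^{1/2}-a_2^{1/2}\in H^2_{loc}(\Omega)$ satisfies $(-\Delta+q_{a_1})w=0$, since each $a_j^{1/2}$ is annihilated by $-\Delta+q_{a_j}$ by construction and $q_{a_1}=q_{a_2}$. By \eqref{ca3}--\eqref{ca4}, $w$ vanishes on $\Gamma$ and $\partial_\nu w$ vanishes on $K\cap G$, so standard boundary unique continuation for second order elliptic operators with $L^\infty$ potential forces $w\equiv 0$ in a neighbourhood of $K\cap G$ in $\Omega$; the connectedness of $\omega$, hence of $\Omega=\omega\times\R$, then propagates $w\equiv 0$ to all of $\Omega$, so $a_1=a_2$. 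The main obstacle I anticipate is the $L^1$ step in Step~1, since the last summand in the rearrangement above contains $a_1^{1/2}-a_2^{1/2}$ whose integrability is not immediate from the definition of $\mathcal A$ and must be extracted from the common reference $a_0$ together with the boundary condition \eqref{ca3}; the remaining steps are conceptually transparent once Theorem \ref{t1} is applicable.
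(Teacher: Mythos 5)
Your proposal follows essentially the same route as the paper: the Liouville transform reducing \eqref{a-eq1} to the Schr\"odinger problem, the verification via \eqref{ca3}--\eqref{ca4} that $\Sigma_{a_1}=\Sigma_{a_2}$ forces $\Lambda_{q_{a_1}}=\Lambda_{q_{a_2}}$ on $\mathcal H_K(\Gamma)$, an application of Theorem \ref{t1} to conclude $q_{a_1}=q_{a_2}$, and unique continuation applied to $a_1^{\frac{1}{2}}-a_2^{\frac{1}{2}}$ (which solves $(-\Delta+q_{a_1})y=0$ with vanishing Cauchy data on $K\cap G$) to recover $a_1=a_2$ using the connectedness of $\omega$. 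The integrability concern you raise about the summand involving $a_1^{\frac{1}{2}}-a_2^{\frac{1}{2}}$ in the $L^1$ verification is a fair one, but the paper does not treat it either: it simply asserts that $a_j\in\mathcal A$ makes $q_1-q_2\in L^1(\Omega)$ clear.
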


For our second application we consider the recovery of potentials that are known in the neighborhood of the boundary outside a compact set. In the spirit of \cite{AU}, we can improve Theorem \ref{t1} in a quite important way in that case. More precisely, we fix $R>0$ and we consider $\gamma_1$ an arbitrary open subsets of $K'\times (-\infty,-R)$, $\gamma_2$ an open subsets of $\partial\omega\times (-\infty,-R)$, $\gamma_1'$ an open subset of $K'\times (R,+\infty)$ and $\gamma_2'$ an open subsets of $\partial\omega\times (R,+\infty)$. Then, we consider the partial DN map given by
$$\Lambda_{q,R}^* : \{ h\in \mathcal H(\Gamma):\ \textrm{supp}(h)\subset (K'\times[-R,R])\cup \gamma_1\cup \gamma_1'\}\ni f \mapsto \mathcal T_1 v_{| (\partial\omega\times[-R,R])\cup \gamma_2\cup \gamma_2'}.$$
Our second application can be stated as follows

\begin{follow}
\label{c2} 
Let $\omega$ be connected, $R>0$, $\delta\in(0,R)$,  $q_1,q_2\in L^\infty(\Omega)$ be such that $q_1-q_2\in L^1(\Omega)$ and $0$ is not in the spectrum of $-\Delta+q_j$, $j=1,2$, with Dirichlet boundary condition on $\Omega$.
We fix $\omega_{1,*}$, $\omega_{2,*}$ two arbitrary $\mathcal C^2$ open and connected subset of $\omega$ satisfying $\partial\omega\subset (\partial \omega_{1,*}\cap \partial \omega_{2,*})$. We consider also $\Omega_{j,*}$, $j=1,2$, two $\mathcal C^2$ open and connected subset of $\Omega$ such that
$$\omega_{1,*}\times(-\infty, -R)\subset \Omega_{1,*}\subset \omega_{1,*}\times(-\infty, \delta-R),\quad \omega_{2,*}\times(R, +\infty)\subset \Omega_{2,*}\subset \omega_{2,*}\times(R-\delta,+\infty)$$
and we assume that 
\bel{c2a} 
q_1(x)=q_2(x),\ x \in \Omega_{1,*}\cup\Omega_{2,*}.
\ee
Then the condition  $\Lambda_{q_1,R}^*=\Lambda_{q_2,R}^*$ implies  $q_1=q_2$.
\end{follow}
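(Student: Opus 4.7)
The plan is to reduce the statement to Theorem \ref{t1}: I will show that the hypothesis $\Lambda_{q_1,R}^*=\Lambda_{q_2,R}^*$, combined with $q_1=q_2$ on the collar neighborhoods $\Omega_{1,*}\cup\Omega_{2,*}$ supplied by \eqref{c2a}, implies $\Lambda_{q_1}=\Lambda_{q_2}$ in the sense of Theorem \ref{t1}, and the conclusion $q_1=q_2$ then follows.

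\textbf{Step 1: extending the Neumann data across the collars.} In the spirit of \cite{AU}, fix $f\in\mathcal H(\Gamma)$ with $\supp(f)\subset(K'\times[-R,R])\cup\gamma_1\cup\gamma_1'$ and let $v_j$ denote the unique $L^2$-solution of \eqref{eq1} with $q=q_j$ and $v_j|_\Gamma=f$. Setting $w:=v_1-v_2$, one has $(-\Delta+q_1)w=(q_2-q_1)v_2$ in $\Omega$, $w=0$ on $\Gamma$, and, by \eqref{c2a}, $(-\Delta+q_1)w=0$ on $\Omega_{1,*}\cup\Omega_{2,*}$. The inclusions $\omega_{1,*}\times(-\infty,-R)\subset\Omega_{1,*}$ and $\partial\omega\subset\partial\omega_{1,*}$ guarantee that $\gamma_2\subset\partial\Omega_{1,*}\cap\Gamma$, while $\Lambda_{q_1,R}^*=\Lambda_{q_2,R}^*$ forces $\partial_\nu w=0$ on $\gamma_2$. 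Elliptic regularity up to the $\mathcal C^2$ boundary piece $\Gamma\cap\partial\Omega_{1,*}$, on which $w$ has zero Dirichlet trace, places $w$ in $H^2$ near it, so the classical unique continuation principle from Cauchy data vanishing on an open subset of the boundary, applied in the connected domain $\Omega_{1,*}$, yields $w\equiv 0$ on $\Omega_{1,*}$. An identical argument in $\Omega_{2,*}$ gives $w\equiv 0$ there, whence $\partial_\nu w=0$ on $\partial\omega\times((-\infty,-R)\cup(R,+\infty))$; combining this with the hypothesis on $(\partial\omega\times[-R,R])\cup\gamma_2\cup\gamma_2'$ yields $\partial_\nu w=0$ on all of $\Gamma$. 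In particular, $\Lambda_{q_1}f=\Lambda_{q_2}f$ on $G$ for every such restricted $f$.

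\textbf{Step 2: an Alessandrini-type orthogonality.} For any $q_1$-solution $u_1\in L^2(\Omega)$ with Dirichlet data $h\in\mathcal H_K(\Gamma)$ and any $v_2$ as in Step 1, Green's identity together with $(-\Delta+q_j)v_j=0$ yields
\[
\int_\Omega(q_2-q_1)\,u_1\,v_2\,dx=\int_\Gamma\!\left(h\,\partial_\nu v_2-f\,\partial_\nu u_1\right)dS.
\]
Let $\tilde v_2$ denote the $q_1$-solution with Dirichlet data $f$. Step 1 gives $\partial_\nu v_2=\partial_\nu\tilde v_2$ on $\Gamma$, and a second application of Green's identity to the pair $(u_1,\tilde v_2)$, both solving the same equation, makes the boundary term vanish. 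Hence
\[
\int_\Omega(q_2-q_1)\,u_1\,v_2\,dx=0
\]
for every such pair $(u_1,v_2)$.

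\textbf{Step 3: conclusion via the CGO machinery of Theorem \ref{t1}.} The proof of Theorem \ref{t1} rests on plugging complex geometric optics solutions $(u_1,v_2)$ of the two equations into exactly this orthogonality identity; to reuse it here I need the class of $q_2$-solutions $v_2$ with Dirichlet support in $(K'\times[-R,R])\cup\gamma_1\cup\gamma_1'$ to be rich enough, and I would establish this via a Runge-type approximation of arbitrary $L^2$-solutions of $(-\Delta+q_2)v=0$ by such restricted-Dirichlet solutions, which by standard duality reduces to the Cauchy-problem unique continuation principle in the waveguide $\Omega$. I expect this last step to be the main obstacle: one must control the approximation in a topology strong enough to pass to the limit inside the CGO construction underlying Theorem \ref{t1}, while simultaneously accommodating the non-compact support of the CGOs and exploiting only the $L^1$-integrability of the difference $q_1-q_2$ supplied by the hypothesis.
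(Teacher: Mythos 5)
Your Steps 1 and 2 are sound and organize the reduction a little differently from the paper: you run the unique continuation argument in the collars $\Omega_{1,*},\Omega_{2,*}$ directly on the difference $w=v_1-v_2$ of the two solutions with the same restricted Dirichlet datum (using that $(-\Delta+q_1)w=(q_2-q_1)v_2$ vanishes there by \eqref{c2a} and that $\partial_\nu w=0$ on $\gamma_2\cup\gamma_2'\subset\partial\Omega_{1,*}\cup\partial\Omega_{2,*}$), upgrade the partial Neumann agreement to agreement on all of $\Gamma$, and then get the exact orthogonality $\int_\Omega(q_2-q_1)u_1v_2\,dx=0$ with only the factor $v_2$ constrained. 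The paper instead obtains this identity for pairs in which \emph{both} factors have restricted traces (the boundary term dies because the trace of the test solution sits exactly where the Neumann data are known to coincide) and then uses two density lemmas rather than one. Both variants rest on the same unique continuation input, so up to this point your argument is legitimate.

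The genuine gap is Step 3. The Runge-type approximation you invoke is the substantive content of the proof --- it is the paper's Lemma \ref{l5} --- and you neither prove it nor identify the feature that makes it true. The restricted-trace solutions are certainly not dense in the full solution class in $L^2(\Omega)$; what saves the argument is that, by \eqref{c2a}, the integrand $(q_2-q_1)u_1v_2$ is supported in $\Omega\setminus(\Omega_{1,*}\cup\Omega_{2,*})$, so density is only needed for the topology of $L^2\bigl(\Omega\setminus(\Omega_{1,*}\cup\Omega_{2,*})\bigr)$. In that topology the Hahn--Banach duality argument closes: a functional $g$ annihilating the restricted class may be taken supported outside the collars; the associated adjoint solution $y$ (with $(-\Delta+q)y=g$, $y_{|\Gamma}=0$) then has vanishing Cauchy data on the relevant open pieces $\gamma_j,\gamma_j'$ of $\partial\Omega_{1,*},\partial\Omega_{2,*}$, hence vanishes on $\Omega_{1,*}\cup\Omega_{2,*}$ by unique continuation, so its Cauchy data vanish on all of $\partial\bigl(\Omega\setminus(\overline{\Omega_{1,*}}\cup\overline{\Omega_{2,*}})\bigr)$ and Green's formula forces $\int gv_0\,dx=0$ for every admissible solution $v_0$, a contradiction. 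Your stated worry about needing ``a topology strong enough to pass to the limit inside the CGO construction'' is misplaced: one fixes the CGO in one factor, approximates the other factor only in $L^2$ of the complement of the collars, and passes to the limit in the bilinear form using merely $q_1-q_2\in L^\infty$; no interaction with the CGO construction is required. (A further detail you would have to check: the duality argument yields density only within the class of solutions whose traces are supported in $K$, so the approximated factor must be the Theorem \ref{t3}-type CGO, the free CGO of Theorem \ref{t4} being placed in the unconstrained slot.) As written, the proposal flags the decisive step as an open obstacle rather than resolving it, so the proof is incomplete.
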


In our third application we consider the recovery of potentials, supported in an infinite cylinder, appearing in a stationary Schr\"odinger equation on a slab. More precisely, for $L>0$, we consider the set $\mathcal O:=\{x=(x_1,x_2,x_3)\in \R^3:\ x_1\in (0,L)\}$, then assuming that $q\in L^\infty(\mathcal O)$ and that $0$ is not in the spectrum of $-\Delta+q$ with Dirichlet boundary condition on $\mathcal O$, we consider the problem
\bel{eqs}
\left\{ 
\begin{array}{rcll} 
(-\Delta + q) v & = & 0, & \mbox{in}\ \mathcal O,\\ 
v_{|x_1=0} & = & 0,& \\
v_{|x_1=L} & = & f .& 
\end{array}
\right.
\ee

Fixing  $r>0$, $\partial\mathcal O_{+}:=\{(L,x_2,x_3):\ x_2,x_3\in \R\}$ and $\partial\mathcal O_{-,r}:=\{(0,x_2,x_3):\ x_2\in(-r,r),\ x_3\in \R\}$, we associate to this problem the partial DN map
$$\mathcal N_{q,r}:H^{\frac{1}{2}}(\partial\mathcal O_{+})\ni f\mapsto\partial_{x_1}v_{|\partial\mathcal O_{-,r}}$$
Then, we prove the following result.
\begin{follow}
\label{c3}
Let $q_1,q_2\in L^\infty(\mathcal O)$ be such that $q_1-q_2\in L^1(\mathcal O)$ and $0$ is not in the spectrum of $-\Delta+q_j$, $j=1,2$, with Dirichlet boundary condition on $\mathcal O$. Moreover, assume that there exists $r\in(0,+\infty)$ such that
\bel{c3a}q_1(x_1,x_2,x_3)=q_2(x_1,x_2,x_3)=0,\quad (x_1,x_2,x_3)\in\{(y_1,y_2,y_3)\in \mathcal O:\ |y_2|\geq r\} .\ee
 Then, for any $R>r$,  the condition 
\bel{c3b} 
\mathcal N_{q_1,R}=\mathcal N_{q_2,R}
\ee
implies $q_1=q_2$.
\end{follow}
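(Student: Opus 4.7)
My plan is to reduce the slab inverse problem to the waveguide setting of Theorem \ref{t1}, via a slab Alessandrini-type integral identity, a Runge-type density argument, and an exhaustion of $\mathcal O$ by waveguides $\Omega_\epsilon\subset\mathcal O$.

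First, I will set up a slab integral identity. For $f_1\in H^{\frac{1}{2}}(\partial\mathcal O_+)$ and $h_1\in H^{\frac{1}{2}}(\partial\mathcal O_-)$ with $\supp(h_1)\subset\partial\mathcal O_{-,R}$, denote by $V^{q_1,f_1}$ (resp.~$W^{q_2,h_1}$) the solution of $(-\Delta+q_1)V=0$ (resp.~$(-\Delta+q_2)W=0$) on $\mathcal O$ with $V|_{x_1=0}=0$, $V|_{x_1=L}=f_1$ (resp.~$W|_{x_1=0}=h_1$, $W|_{x_1=L}=0$). Green's identity and the symmetry relation $\int_{\partial\mathcal O_-}h\,\mathcal N_q f\,dS+\int_{\partial\mathcal O_+}f\,\partial_{x_1}W^{q,h}\,dS=0$ then yield the slab integral identity
\begin{equation*}
\int_\mathcal O (q_1-q_2)\,V^{q_1,f_1}\,W^{q_2,h_1}\,dx=\int_{\partial\mathcal O_{-,R}}h_1\,(\mathcal N_{q_2,R}-\mathcal N_{q_1,R})f_1\,dS,
\end{equation*}
which vanishes by \eqref{c3b}.

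Next, for $\epsilon>0$ small, I will fix a $C^2$ bounded open connected domain $\omega_\epsilon\subset\R^2$ satisfying $\overline{\omega_\epsilon}\subset(0,L)\times\R$ and $\omega_\epsilon\supset[\epsilon,L-\epsilon]\times[-r,r]$, and set $\Omega_\epsilon:=\omega_\epsilon\times\R\subset\mathcal O$. Since $\mathcal O\setminus\overline{\Omega_\epsilon}$ is connected, a Runge-type density should hold: given $\psi\in L^2(\Omega_\epsilon)$ $L^2$-orthogonal to $\{V^{q_1,f}|_{\Omega_\epsilon}\}_{f}$, the auxiliary solution $\psi^\sharp$ of $(-\Delta+q_1)\psi^\sharp=\psi\,\mathds{1}_{\Omega_\epsilon}$ on $\mathcal O$ with $\psi^\sharp|_{\partial\mathcal O}=0$ satisfies $\partial_{x_1}\psi^\sharp|_{\partial\mathcal O_+}=0$; Calder\'on's UCP applied in the connected set $\mathcal O\setminus\overline{\Omega_\epsilon}$ then forces $\psi^\sharp\equiv 0$ there, producing zero Cauchy data on $\partial\Omega_\epsilon$, and integration by parts shows that $\psi$ is orthogonal to every waveguide solution of $(-\Delta+q_1)U=0$ on $\Omega_\epsilon$. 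An analogous duality argument for $\{W^{q_2,h}|_{\Omega_\epsilon}\}_{\supp(h)\subset\partial\mathcal O_{-,R}}$, with UCP propagating zero Cauchy data from $\partial\mathcal O_{-,R}$ (where the strict inclusion $R>r$ is essential to obtain a nonempty boundary patch outside $\overline{\Omega_\epsilon}$), yields density in the corresponding space of $q_2$-solutions on $\Omega_\epsilon$.

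Combining these density results with the slab integral identity will give $\int_{\Omega_\epsilon}(q_1-q_2)U_1 U_2\,dx=0$ for all waveguide solutions $U_j$ of $(-\Delta+q_j)U_j=0$ on $\Omega_\epsilon$, which by the waveguide Alessandrini identity is equivalent to the full DN equality $\Lambda_{q_1}^{\Omega_\epsilon}=\Lambda_{q_2}^{\Omega_\epsilon}$ (taking $K=G=\partial\Omega_\epsilon$ in Theorem \ref{t1}). Theorem \ref{t1} applied to $q_1|_{\Omega_\epsilon}$ and $q_2|_{\Omega_\epsilon}$ then yields $q_1=q_2$ on $\Omega_\epsilon$, and choosing a sequence $\epsilon_n\to 0$ with $\bigcup_n\Omega_{\epsilon_n}=\mathcal O$ gives $q_1=q_2$ on $\mathcal O$. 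The hardest part will be the Runge density, which depends crucially on the connectedness of $\mathcal O\setminus\overline{\Omega_\epsilon}$ and on the strict inclusion $r<R$, ensuring that the UCP has enough boundary data on $\partial\mathcal O_{-,R}\setminus\overline{\Omega_\epsilon}$ to propagate; verifying that $0$ remains outside the Dirichlet spectrum of $-\Delta+q_j|_{\Omega_\epsilon}$ for suitable $\omega_\epsilon$ is a further technical point, handled by a generic perturbation of $\omega_\epsilon$.
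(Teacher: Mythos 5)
There is a genuine gap, and it sits at the very first reduction. Your waveguide $\Omega_\epsilon=\omega_\epsilon\times\R$ is taken with $\overline{\omega_\epsilon}\subset(0,L)\times\R$, i.e.\ compactly contained in the $x_1$-direction, but the hypothesis \eqref{c3a} only localizes $q_1-q_2$ in the $x_2$-variable: its support is (a subset of) $(0,L)\times[-r,r]\times\R$, which reaches all the way to the slab faces $x_1=0$ and $x_1=L$. Consequently $\mathrm{supp}(q_1-q_2)\not\subset\Omega_\epsilon$ for any $\epsilon>0$, and your slab identity $\int_{\mathcal O}(q_1-q_2)V^{q_1,f}W^{q_2,h}\,dx=0$ does not localize to $\int_{\Omega_\epsilon}(q_1-q_2)V^{q_1,f}W^{q_2,h}\,dx=0$: there is a leftover integral over $\mathcal O\setminus\Omega_\epsilon$ where $q_1-q_2$ need not vanish. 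Your Runge step only controls the approximants in $L^2(\Omega_\epsilon)$, so it gives no handle on that leftover term, and the exhaustion $\epsilon\to0$ does not repair this because the density argument and the target solutions $U_1,U_2$ are tied to a fixed $\epsilon$. (A secondary point: in your setup the whole of $\partial\mathcal O_{-,R}$ already lies outside $\overline{\Omega_\epsilon}$, so the role you assign to $R>r$ is not where that hypothesis actually bites.)

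The natural repair is to let the cross-section reach the slab boundary, which is exactly what the paper does: it fixes $(0,L)\times(-r-\delta,r+\delta)\subset\omega\subset(0,L)\times(-R,R)$, so that $\mathrm{supp}(q_1-q_2)\subset\Omega=\omega\times\R$ (this is where $R>r$ enters, ensuring $\partial\Omega\cap\{x_1=0\}\subset\partial\mathcal O_{-,R}$ and leaving room on $\{x_1=0\}$ for unique continuation into both components of $\mathcal O\setminus\overline\Omega$). But once $\Omega$ touches $\partial\mathcal O$, the restricted slab solutions carry boundary conditions on $\partial\Omega\cap\partial\mathcal O$, the Runge density (Lemma \ref{l7}) only holds within the constrained class $\mathcal W_{q_2}(\Omega)$ of solutions vanishing on $\{x_1=0\}$, and the orthogonality relation acquires an uncontrolled boundary term on $\{x_1=L\}$. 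So Theorem \ref{t1} cannot be invoked as a black box: the paper instead constructs a CGO for $q_2$ vanishing on $\{x_1=0\}$ by even reflection of the potential across that face, pairs it with a free CGO for $q_1$, and absorbs the $\{x_1=L\}$ boundary term using the Carleman estimate of Corollary \ref{car1} together with the decay of the weight $e^{-\rho\theta\cdot x'}$ with $\theta_1>0$ (Lemma \ref{l8}), before passing to the limit $\rho\to\infty$ as in Section 5. These two ingredients — the reflection construction and the Carleman control of the unmeasured face — are precisely what your proposal is missing.
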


\subsection{Comments about the main result and the applications}

To our best knowledge this paper is the first paper proving recovery of coefficients that are neither compactly supported nor periodic for elliptic equations in unbounded domains from boundary measurements. Indeed, beside the present paper it seems that only these two cases have been addressed so far (see \cite{CKS2,CKS3,KLU,LU}). 

Like several other papers, the main tools in our analysis are suitable solutions of the equation also called complex geometric optics (CGO in short) solutions combined with Carleman estimates. It has been proved by \cite{CKS2,CKS3,KLU,LU} that for compactly supported or periodic coefficients one can apply unique continuation or Floquet decomposition in order to transform the problem on an unbounded domain into a problem on a bounded domain. Then, one can use the  CGO solutions for the problem on the bounded domain in order to prove the recovery of the coefficients under consideration. For more general class of coefficients, one can not apply such arguments and the construction of CGO solutions for the problem on an unbounded domain seems unavoidable. In this paper, using a suitable localization in space, that propagates along the infinite direction of the unbounded cylindrical domain, we introduce, for what seems to be the first time, CGO solutions that can be directly applied to the inverse problem on the unbounded domain. This makes a difference with previous related works and it allows also to derive results like Corollary \ref{c2} where the recovery of non compactly supported coefficients is proved by mean of measurements on a bounded subset of the unbounded boundary. The construction of the CGO solutions in consideration requires also some extension of arguments, like Carleman estimate and construction of the decaying remainder term, to unbounded domain that we prove  in Section 2, 3 and 4.

Let us mention that the arguments used for the construction of the CGO solutions work only if the unbounded domain has one infinite direction (or a cylindrical shape). This approach fails if the unbounded domain has more than one infinite direction like the slab. However, following the approach of \cite{KLU,LU}, by mean of unique continuation properties we prove in Corollary \ref{c3} the recovery of coefficients supported in an unbounded cylinder. Here the cylinder can be arbitrary and this result extend the one of \cite{KLU,LU} to non compactly supported coefficients. Note also that, combining the density results stated in Lemma \ref{l5}, used for the proof of Corollary \ref{c2}, with Corollary \ref{c3}, one can check that the data used by \cite{KLU,LU} for the recovery of compactly supported coefficients allow to recover more general class of coefficients supported in infinite cylinder and known on the neighborhood of the boundary outside a compact set.

In the main result of this paper, stated in Theorem \ref{t1}, we show that the partial DN map $\Lambda_q$ allows to recover coefficients $q$ which are equivalent modulo integrable functions to a fixed bounded function. This last condition is not fulfilled by the class of potential, periodic along the axis of the cylindrical domain, considered by \cite{CKS2,CKS3}. However, combining Theorem \ref{t1} with \cite{CKS2,CKS3}, one can conclude that the partial DN map $\Lambda_q$ allows  to recover the class of coefficients $q$ considered in the present paper as well as potentials $q$ which are periodic along the axis of $\Omega$.

Let us remark that in a similar way to \cite{KLU,LU}, with suitable choice of  admissible coefficients $q$, it is possible to formulate  \eqref{eqs} with $q$ replaced by $q-k^2$ and $k^2$ taking some suitable value in the absolute continuous spectrum of the operator $-\Delta+q$ with Dirichlet boundary condition. In this context, \eqref{eqs} admits a unique solution satisfying the Sommerfeld radiation condition on the infinite directions of the domain. Assuming that $q$ is chosen in such a way that these conditions are fulfilled for \eqref{eq1} and \eqref{eqs}, one can adapt the argument of the present paper to this problem.
In this paper we do not consider such extension of our main result which requires a study of the forward problem.  

Let us also observe that like in \cite{KLU,LU}, Corollary \ref{c3} can be formulated with different kinds of measurements on the side $x_1=0$ and $x_1=L$ of $\partial\mathcal O$.

\subsection{Outline}

This paper is organized as follows. In Section 2, we start by considering the CGO solutions, without boundary conditions, for the problem in an unbounded cylindrical domain. For the construction of these solutions we combine different arguments such as localization of the CGO solutions along the axis of the waveguide and some arguments of separation of variables. Then, in the spirit of \cite{BU}, we introduce in Section 3 a Carleman estimate with linear weight stated in an infinite cylindrical domain. Using this Carleman estimate, we build in Section 4 CGO solutions vanishing on some parts of the boundary. In Section 5, we combine all these results in order to prove Theorem \ref{t1}. Finally, Section 6 is devoted to the applications of the main result stated in Corollary \ref{c1}, \ref{c2} and \ref{c3}. 

\section{CGO solutions without conditions}
\label{sec2}
In this section we introduce the first class of CGO solutions of our problem without boundary conditions. These CGO solutions correspond to some specific solutions $u\in H^2(\Omega)$ of
$-\Delta u+qu=0$ in $\Omega$ for $q\in L^\infty(\Omega)$. More precisely, we start by fixing $\theta\in\mathbb S^{1}:=\{y\in\R^2:\ |y|=1\}$, $\xi'\in\theta^\bot:=\{y\in\R^2:\ y\cdot\theta=0\}$, $\xi:=(\xi',\xi_3)\in \R^3$ with $\xi_3\neq0$. Then, we consider $\eta\in\mathbb S^2:=\{y\in\R^3:\ |y|=1\}$ defined by
$$\eta=\frac{(\xi',-\frac{|\xi'|^2}{\xi_3})}{\sqrt{|\xi'|^2+\frac{|\xi'|^4}{\xi_3^2}}}.$$ 
In particular, we have
\bel{orth}\eta\cdot\xi=(\theta,0)\cdot\xi=(\theta,0)\cdot\eta=0.\ee
Then, we fix $\chi\in\mathcal C^\infty_0(\R;[0,1])$ such that $\chi=1$ on a neighborhood of $0$ in $\R$ and, for $\rho>1$, we consider  solutions $u\in H^2(\Omega)$ of
$-\Delta u+qu=0$ in $\Omega$ taking the form
\bel{CGO1}u(x',x_3)=e^{-\rho \theta\cdot x'}\left(e^{i\rho \eta\cdot x}\chi\left(\rho^{-\frac{1}{4}}x_3\right)e^{-i\xi\cdot x}+w_\rho(x)\right),\quad x=(x',x_3)\in\Omega.\ee
Here the remainder term $w_\rho\in H^2(\Omega)$ satisfies the decay property 
\bel{CGO2} \rho^{-1}\norm{w_\rho}_{H^2(\Omega)}+\rho\norm{w_\rho}_{L^2(\Omega)}\leq C\rho^{\frac{7}{8}},\ee
with $C$ independent of $\rho$. This construction can be summarized in the following way.
\begin{theorem}\label{t4} There exists $\rho_0>1$ such that, for all $\rho>\rho_0$, the equation  $-\Delta u+qu=0$ admits a solution $u\in H^2(\Omega)$ of the form \eqref{CGO1} with $w_\rho$ satisfying the decay property \eqref{CGO2}.\end{theorem}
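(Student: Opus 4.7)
The plan is to seek $u$ in the ansatz form
\[
u(x) = e^{-\rho\theta\cdot x'}\bigl(a_\rho(x) + w_\rho(x)\bigr),\qquad a_\rho(x) := e^{i\rho\eta\cdot x}\,\chi\bigl(\rho^{-1/4}x_3\bigr)\,e^{-i\xi\cdot x},
\]
and to solve for the remainder $w_\rho$. A direct conjugation reduces $(-\Delta+q)u = 0$ to $(P_\rho+q)w_\rho = F_\rho$, where $P_\rho := -\Delta + 2\rho(\theta,0)\cdot\nabla - \rho^2$ and $F_\rho := -P_\rho a_\rho - q a_\rho$. Thanks to the orthogonality relations \eqref{orth} together with $\xi'\in\theta^\perp$, the drift term $2\rho(\theta,0)\cdot\nabla a_\rho$ vanishes identically; and since $|\rho\eta-\xi|^2 = \rho^2 + |\xi|^2$ (using $|\eta|=1$ and $\eta\cdot\xi=0$), the contributions of size $\rho^2$ in $-\Delta a_\rho$ and $-\rho^2 a_\rho$ cancel exactly, so that only the error terms produced by derivatives falling on the axial cutoff $\chi(\rho^{-1/4}x_3)$ survive.

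A short computation then yields
\[
P_\rho a_\rho = e^{i(\rho\eta-\xi)\cdot x}\Bigl[|\xi|^2\chi(\rho^{-1/4}x_3) - 2i(\rho\eta_3-\xi_3)\rho^{-1/4}\chi'(\rho^{-1/4}x_3) - \rho^{-1/2}\chi''(\rho^{-1/4}x_3)\Bigr].
\]
By the rescaling $y_3 = \rho^{-1/4}x_3$, each factor $\chi^{(k)}(\rho^{-1/4}\cdot)$ has $L^2(\R)$-norm bounded by $\rho^{1/8}\|\chi^{(k)}\|_{L^2(\R)}$, so the dominant contribution $-2i\rho^{3/4}\eta_3\chi'(\rho^{-1/4}x_3)$ has $L^2(\Omega)$-norm of order $\rho^{7/8}$, while $\|qa_\rho\|_{L^2(\Omega)}\lesssim\|q\|_\infty \rho^{1/8}$. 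Hence $\|F_\rho\|_{L^2(\Omega)}\leq C\rho^{7/8}$ with $C$ independent of $\rho$.

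The core step is the solvability of $(P_\rho+q)w_\rho = F_\rho$ on $\Omega$ with the gain
\[
\rho\|w_\rho\|_{L^2(\Omega)} + \|\nabla w_\rho\|_{L^2(\Omega)} \leq C\|F_\rho\|_{L^2(\Omega)}.
\]
The natural approach is separation of variables via a partial Fourier transform in the axial variable $x_3$: for each dual frequency $\tau\in\R$, the problem reduces to a $2$D equation driven by $P_\rho^{(\tau)} := -\Delta_{x'} + 2\rho\theta\cdot\nabla_{x'} + (\tau^2-\rho^2)$ on $\omega$, which, after extending the data by zero from $\omega$ to $\R^2$, can be inverted by a Faddeev–Bukhgeim type parametrix for the conjugated Laplacian $e^{\rho\theta\cdot x'}(-\Delta_{x'}+\tau^2)e^{-\rho\theta\cdot x'}$ with $L^2$-gain of order $\rho^{-1}$ (the shift $\tau^2$ entering as a benign nonnegative perturbation). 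Plancherel in $\tau$ reassembles these fibrewise estimates into the desired $L^2(\Omega)$-bound for a right inverse of $P_\rho$, and the zeroth-order term $q$ is absorbed by a Neumann series once $\rho > C\|q\|_\infty$. The $H^2$-bound then follows by elliptic bootstrap from $-\Delta w_\rho = F_\rho - qw_\rho - 2\rho(\theta,0)\cdot\nabla w_\rho + \rho^2 w_\rho$ combined with the $L^2$ and $H^1$ estimates on $w_\rho$, giving $\|w_\rho\|_{H^2(\Omega)}\lesssim\rho\|F_\rho\|_{L^2(\Omega)}\lesssim \rho^{15/8}$, i.e.\ exactly \eqref{CGO2}.

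The main obstacle is the $2$D inversion on the unbounded cylinder: the Faddeev-type parametrix on $\R^2$ only supplies weighted $L^2$-estimates, so converting the gain $\rho^{-1}$ into a bound in plain $L^2(\Omega)$ requires $F_\rho$ to be spatially localized in $x_3$. This is precisely the role of the scaling $\rho^{-1/4}$ in the axial cutoff: it simultaneously ensures that (i) $a_\rho\in L^2(\Omega)$, (ii) the commutator $[P_\rho,\chi(\rho^{-1/4}\cdot)]$ produces an error of size no worse than $\rho^{7/8}$, and (iii) the weighted 2D estimates translate back to $L^2(\Omega)$-bounds with only an $\rho^{1/8}$ loss in the weights. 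Any cruder or finer window degrades the final exponent, which explains the distinctive $\rho^{7/8}$ appearing in \eqref{CGO2}, a feature absent from the classical bounded-domain CGO construction.
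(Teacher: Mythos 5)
Your proposal follows essentially the same route as the paper: the same localized ansatz, the same conjugation to $P_{-\rho}w_\rho=F_\rho$ with the drift and $\rho^2$ terms cancelling by \eqref{orth}, the partial Fourier transform in $x_3$ reducing to fibrewise $2$D problems on $\omega$ inverted by a fundamental-solution/parametrix with $L^2$-gain $\rho^{-1}$ (the paper cites H\"ormander/Choulli rather than Faddeev, but the symbol estimate $\tilde p_{k,-\rho}\geq 2\rho$ is the same mechanism), Plancherel reassembly, the bound $\|F_\rho\|_{L^2(\Omega)}\leq C\rho^{7/8}$ with the dominant term $\rho^{3/4}\eta_3\chi'(\rho^{-1/4}x_3)$, and a fixed-point/Neumann-series absorption of $q$. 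The one step you should rephrase is the $H^2$ bound: deducing $H^2(\Omega)$ regularity up to the boundary by ``elliptic bootstrap'' from $\Delta w_\rho\in L^2(\Omega)$ alone does not work, since $w_\rho$ satisfies no boundary condition and $H_\Delta(\Omega)\neq H^2(\Omega)$. The correct (and readily available) argument is the one the paper uses: the fibrewise inverse already maps $L^2(\omega)$ into $H^2(\omega)$ with norm $O(\rho)$, together with the bound on $k^2E_{k,\rho}$, because $\sup_\zeta(1+|\zeta|^2+k^2)/\tilde p_{k,-\rho}(\zeta)\leq C\rho$; summing in $k$ gives $\|w_\rho\|_{H^2(\Omega)}\leq C\rho\|F_\rho\|_{L^2(\Omega)}\leq C\rho^{15/8}$ directly, which is exactly \eqref{CGO2}.
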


\begin{remark}\label{r1} \emph{Comparing to CGO solutions on bounded domains, the main difficulty in the construction of  CGO solutions in our context comes from the fact that $\Omega$ is not bounded and the CGO solutions should lye in $L^2(\Omega)$. This means that the usual principal parts of the CGO solutions  considered by \cite{BU,KSU,SU}, taking the form $e^{-\rho \theta\cdot x'}e^{i\rho \eta\cdot x}e^{-i\xi\cdot x}$ in our context, will be inadequate since it will not be lying in $L^2(\Omega)$. This is the main reason why we introduce the new expression involving the cut-off $\chi$ that allows to localize such expressions. The main difficulty in our choice consist of using this expression to localize without loosing the decay properties stated in \eqref{CGO2}. This will be done by  assuming that the principal part of the  CGO solutions given by
$$e^{-\rho \theta\cdot x'}e^{i\rho \eta\cdot x}\chi\left(\rho^{-\frac{1}{4}}x_3\right)e^{-i\xi\cdot x}$$
 propagates in some suitable way along the axis of the waveguide with respect to the large parameter $\rho$. Actually, this seems to be one of the main novelty in our construction of CGO solutions comparing to any others.}\end{remark}

Clearly, $u$ solves $-\Delta u+qu=0$ if and only if $w_\rho$ solves
\begin{equation}\label{eqGO1} P_{-\rho}w_\rho=-qw_\rho-e^{\rho \theta\cdot x'}(-\Delta+q)e^{-\rho \theta\cdot x'}e^{i\rho \eta\cdot x}\chi\left(\rho^{-\frac{1}{4}}x_3\right)e^{-i\xi\cdot x},\ee
with $P_s$, $s\in\R$, the differential operator defined by
\bel{Ps}P_s:=-\Delta-2s\theta\cdot\nabla'-s^2,\ee
where $\nabla'=(\partial_{x_1},\partial_{x_2})^T$. In order to define a suitable set of solutions of \eqref{eqGO1}, we start by considering
the following equation
\begin{equation}\label{eqGO3} P_{-\rho}y=F,\quad x\in\Omega .\ee
Taking the Fourier transform with respect to $x_3$, denoted by $\mathcal F_{x_3}$, on both side of this identity we get
\begin{equation}\label{eqGO4} P_{k,-\rho}y_k=F_k,\quad k\in\R,\ee
with $F_k(x')=\mathcal F_{x_3}F(x',k)$, $y_k(x')=\mathcal F_{x_3}y(x',k)$ and $$P_{k,-\rho}=-\Delta'+2\rho\theta\cdot\nabla'-\rho^2+k^2.$$
Here $\Delta'=\partial_{x_1}^2+\partial_{x_2}^2$ and $\mathcal F_{x_3}$ is defined by
$$\mathcal F_{x_3}h(x',k):=(2\pi)^{-\frac{1}{2}}\int_\R h(x',x_3)e^{-ikx_3}dx_3,\quad h\in L^1(\Omega).$$
We fix also $p_{k,-\rho}(\zeta)=|\zeta|^2+2i\rho\theta\cdot\zeta+k^2$, $\zeta\in \R^2$, $k\in\R$, such that, for  $D_{x'}=-i\nabla'$, we have $p_{k,-\rho}(D_{x'})=P_{k,-\rho}$. Applying some  results of \cite{Ch,Ho2,I} about solutions of  PDEs with constant coefficients we obtain the following.

\begin{lemma}\label{l1} For every $\rho>1$ and $k\in\R$  there exists a bounded operator $$E_{k,\rho}:\ L^2(\omega)\to L^2(\omega)$$ such that:
\begin{equation}\label{l1a}P_{k,-\rho} E_{k,\rho}F=F,\quad F\in L^2(\omega),\end{equation}
\begin{equation}\label{l1b} \norm{E_{k,\rho}}_{\mathcal B(L^2(\omega))}\leq C\rho^{-1},\end{equation}
\begin{equation}\label{l1c} E_{k,\rho}\in \mathcal B(L^2(\omega);H^2(\omega))\ee and
\bel{l1d}\quad \norm{E_{k,\rho}}_{\mathcal B(L^2(\omega);H^2(\omega))}+\norm{k^2E_{k,\rho}}_{\mathcal B(L^2(\omega))}\leq C\rho,\end{equation}
with $C>0$ depending only on  $\omega$.\end{lemma}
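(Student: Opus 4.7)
My strategy is to construct $E_{k,\rho}$ as a right inverse of the constant-coefficient differential operator $P_{k,-\rho}$ on the bounded planar domain $\omega$ via a duality argument à la H\"ormander, extracting the $\rho$- and $k$-dependent estimates from a parameter-dependent Carleman inequality for the formal adjoint $P_{k,\rho}$ of $P_{k,-\rho}$ on $\mathcal C_0^\infty(\omega)$.

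The heart of the matter is the Carleman-type bound
\[
\rho\,\|\phi\|_{L^2(\omega)}\leq C_\omega\,\|P_{k,\rho}\phi\|_{L^2(\omega)},\qquad \phi\in\mathcal C_0^\infty(\omega),\ \rho>0,\ k\in\R,
\]
with $C_\omega$ depending only on $\omega$. I prove it by writing $P_{k,\rho}=A+B$ where $A:=-\Delta'+k^2-\rho^2$ is self-adjoint and $B:=-2\rho\,\theta\cdot\nabla'$ is anti-self-adjoint on $\mathcal C_0^\infty(\omega)$. Since both have constant coefficients, $[A,B]=0$, and expanding the square gives $\|P_{k,\rho}\phi\|^2=\|A\phi\|^2+\|B\phi\|^2\geq 4\rho^2\|\theta\cdot\nabla'\phi\|^2$. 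A one-dimensional Poincar\'e inequality along every chord of $\omega$ in the direction $\theta$ (each of length at most $\mathrm{diam}(\omega)$) yields $\|\theta\cdot\nabla'\phi\|_{L^2(\omega)}\geq C_\omega^{-1}\|\phi\|_{L^2(\omega)}$, and the claim follows.

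From the Carleman estimate, properties \eqref{l1a} and \eqref{l1b} are obtained by the standard Hahn-Banach/Riesz representation argument on the range of $P_{k,\rho}$ in $L^2(\omega)$: given $F\in L^2(\omega)$, the linear form $\ell_F(P_{k,\rho}\phi):=\langle F,\phi\rangle_{L^2(\omega)}$ is well-defined by injectivity, continuous with norm at most $C_\omega\rho^{-1}\|F\|_{L^2(\omega)}$, and extends to a bounded form on $L^2(\omega)$ represented by $E_{k,\rho}F\in L^2(\omega)$ satisfying $P_{k,-\rho}(E_{k,\rho}F)=F$ in $\mathcal D'(\omega)$ together with the desired $L^2$ bound. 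For the global $H^2$-regularity \eqref{l1c}, I repeat the same Hahn-Banach construction on a fixed ball $B\supset\overline\omega$, extending $F$ by zero; interior elliptic regularity for the elliptic, constant-coefficient operator $P_{k,-\rho}$ (principal part $-\Delta'$) applied to the resulting solution on $B$ then yields $H^2$ membership in $\omega$, in line with the H\"ormander theory of right inverses for constant-coefficient operators (\cite{Ch,Ho2,I}).

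The quantitative bound \eqref{l1d} follows from the equation by writing $-\Delta'(E_{k,\rho}F)=F-2\rho\,\theta\cdot\nabla'(E_{k,\rho}F)+(\rho^2-k^2)E_{k,\rho}F$; combined with the $L^2$ bound and the interpolation inequality $\|\nabla'u\|^2_{L^2(\omega')}\leq C\|u\|_{L^2(\omega'')}\|u\|_{H^2(\omega'')}$ on nested neighborhoods of $\overline\omega$ in $B$, an absorption argument yields $\|E_{k,\rho}F\|_{H^2(\omega)}\leq C(\rho+k^2/\rho)\|F\|_{L^2(\omega)}$. The main technical obstacle is the uniform control of $\|k^2E_{k,\rho}\|_{\mathcal B(L^2(\omega))}$ as $|k|\to\infty$: for $|k|^2\leq\rho^2$ the required estimate follows from~\eqref{l1b}, but for $|k|^2>\rho^2$ the $\rho^{-1}$ bound is insufficient, and I instead rerun the Carleman estimate exploiting the positivity of $A=-\Delta'+k^2-\rho^2$; the identity $\langle A\phi,\phi\rangle=\|\nabla'\phi\|^2+(k^2-\rho^2)\|\phi\|^2$ combined with Cauchy--Schwarz gives $\|A\phi\|\geq(k^2-\rho^2)\|\phi\|$, hence $\|P_{k,\rho}\phi\|\geq \tfrac12 k^2\|\phi\|$ when $|k|^2\geq 2\rho^2$, and duality then furnishes $\|E_{k,\rho}F\|_{L^2}\leq 2k^{-2}\|F\|_{L^2}$, which in turn yields $\|k^2 E_{k,\rho}F\|_{L^2}\leq 2\|F\|_{L^2}\leq 2\rho\|F\|_{L^2}$ for $\rho\geq 1$.
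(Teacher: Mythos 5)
Your proof is correct in substance but follows a genuinely different route from the paper. The paper invokes H\"ormander's theory of fundamental solutions for constant-coefficient operators (\cite[Theorem 2.3]{Ch}, \cite[Theorem 10.3.7]{Ho2}): it gets a right inverse $E_{k,\rho}$ together with the bound $\norm{Q(D_{x'})E_{k,\rho}}_{\mathcal B(L^2(\omega))}\leq C\sup_\zeta |Q(\zeta)|/\tilde p_{k,-\rho}(\zeta)$, and then \eqref{l1b} and \eqref{l1d} reduce to two elementary lower bounds on the weighted symbol $\tilde p_{k,-\rho}$ (namely $\tilde p\geq 2\rho$ everywhere, and $\tilde p\geq (k^2+|\zeta|^2)/2$ when $k^2+|\zeta|^2\geq 2\rho^2$); in particular the $H^2$ bound comes for free from the symbol calculus. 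You instead prove a Carleman estimate for the adjoint $P_{k,\rho}$ by the orthogonal decomposition into commuting self-adjoint and anti-self-adjoint parts plus a chord-wise Poincar\'e inequality, and then run Hahn--Banach/Riesz duality; your large-$k$ bound $\norm{P_{k,\rho}\phi}\geq \tfrac12 k^2\norm{\phi}$ for $k^2\geq 2\rho^2$ is the exact dual counterpart of the paper's estimate on $\re\, p_{k,-\rho}$. This is more elementary and self-contained, at the price of the $H^2$ estimate, which in your scheme must be recovered a posteriori from interior elliptic regularity with a nested-domain absorption of the term $2\rho\,\theta\cdot\nabla' u$ --- a standard but genuinely technical iteration that the paper avoids entirely. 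Two points should be tightened. First, since Hahn--Banach extensions are not unique, you must build a \emph{single} operator satisfying all the stated bounds: perform the whole construction once on the fixed ball $B\supset\overline\omega$, and apply duality with the combined Carleman estimate $\max\bigl(C_\omega^{-1}\rho,\tfrac12 k^2\mathds{1}_{k^2\geq 2\rho^2}\bigr)\norm{\phi}\leq\norm{P_{k,\rho}\phi}$, so that the one extension inherits both $L^2$ bounds simultaneously. Second, your intermediate bound $\norm{E_{k,\rho}F}_{H^2(\omega)}\leq C(\rho+k^2/\rho)\norm{F}_{L^2}$ is not uniform in $k$ and does not yet give \eqref{l1d}; you must feed the improved bound $\norm{E_{k,\rho}F}_{L^2}\leq 2k^{-2}\norm{F}_{L^2}$ (valid for $k^2\geq 2\rho^2$) back into the bootstrap, which turns $(\rho^2+k^2)\norm{E_{k,\rho}F}_{L^2}$ into $O(\norm{F}_{L^2})$ in that regime and yields the uniform $C\rho$ of \eqref{l1d}. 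With these adjustments the argument is complete.
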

\begin{proof} In light of \cite[Thoerem 2.3]{Ch} (see also \cite[Theorem 10.3.7]{Ho2}), there exists a bounded operator $E_{k,\rho}\in \mathcal B( L^2(\omega))$, defined from  fundamental solutions associated to $P_{k,-\rho}$ (see Section 10.3 of \cite{Ho2}),  such that \eqref{l1a} is fulfilled. In addition, fixing 
\[\tilde{p}_{k,-\rho}(\zeta):=\left(\sum_{\alpha\in\mathbb N^2}|\partial^\alpha_\zeta p_{k,-\rho}(\zeta)|^2\right)^{{1\over2}},\quad \zeta\in\R^2,\]
for all differential operator  $Q(D_{x'})$ with ${Q(\zeta)\over \tilde {p}_{k,-\rho}(\zeta)}$ a bounded function, we have $Q(D_{x'})E_{k,\rho}\in\mathcal B(L^2(\omega))$ and there exists  a constant $C$ depending only on $\omega$ such that
\begin{equation}\label{l1e}\norm{Q(D_{x'})E_{k,\rho}}_{\mathcal B(L^2((0,T)\times(-R,R)))}\leq C\sup_{\zeta\in\R^{2}}{|Q(\zeta)|\over \tilde {p}_{k,-\rho}(\zeta)}.\end{equation}
Note that $$\tilde{p}_{k,-\rho}(\zeta)\geq \sqrt{\abs{\im \partial_{\zeta_1} p_{k,-\rho}(\mu,\eta)}^2+\abs{\im \partial_{\zeta_2} p_{k,-\rho}(\mu,\eta)}^2}=2\rho,\quad \zeta\in\R^2.$$
Therefore, \eqref{l1e} implies
\[\norm{E_{k,\rho}}_{\mathcal B(L^2(\omega))}\leq C\sup_{\zeta\in\R^{2}}{1\over \tilde {p}_{k,-\rho}(\zeta)}\leq C\rho^{-1}\]
and \eqref{l1b} is fulfilled. In a same way, for all $\zeta\in\R^2$,  assuming that $k^2+|\zeta|^2\geq 2\rho^2$, we have
$$\tilde{p}_{k,-\rho}(\zeta)\geq \abs{\re p_{k,-\rho}(\zeta)}=k^2+|\zeta|^2-\rho^2\geq \frac{k^2+|\zeta|^2}{2}.$$
Thus, we have 
$$\sup_{\zeta\in\R^{2}}{|\zeta|^2+k^2\over \tilde {p}_{k,-\rho}(\zeta)}\leq \sup_{k^2+|\zeta|^2\geq 2\rho^2}{|\zeta|^2+k^2\over \tilde {p}_{k,-\rho}(\zeta)}+\sup_{k^2+|\zeta|^2\leq 2\rho^2}{|\zeta|^2+k^2\over \tilde {p}_{k,-\rho}(\zeta)}\leq 2+2\rho^2\sup_{\zeta\in\R^{2}}{1\over \tilde {p}_{k,-\rho}(\zeta)}\leq 3\rho.$$
Then, in view of  \cite[Theorem 2.3]{Ch}, we deduce \eqref{l1c} with
\[\norm{E_{k,-\rho}}_{\mathcal B(L^2(\omega);H^2(\omega))}+\norm{k^2E_{k,-\rho}}_{\mathcal B(L^2(\omega))}\leq C\sup_{\zeta\in\R^{2}}{1+|\zeta|^2+k^2\over \tilde {p}_{k,-\rho}(\zeta)}\leq C\rho\]
which implies \eqref{l1d}.\end{proof}
Applying this lemma, we can now consider solutions of \eqref{eqGO3} given by the following result.
\begin{lemma}\label{l2} For every $\rho>1$   there exists a bounded operator $$E_{\rho}:\ L^2(\Omega)\to L^2(\Omega)$$ such that:
\begin{equation}\label{l2a}P_{-\rho} E_{\rho}F=F,\quad F\in L^2(\Omega),\end{equation}
\begin{equation}\label{l2b} \norm{E_{\rho}}_{\mathcal B(L^2(\Omega))}\leq C\rho^{-1},\end{equation}
\begin{equation}\label{l2c} E_{\rho}\in \mathcal B(L^2(\Omega);H^2(\Omega))\ee and
\bel{l2d}\quad \norm{E_{\rho}}_{\mathcal B(L^2(\Omega);H^2(\Omega))}\leq C\rho,\end{equation}
with $C>0$ depending only on  $\Omega$.\end{lemma}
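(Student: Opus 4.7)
The natural strategy is to use the partial Fourier transform $\mathcal{F}_{x_3}$ to diagonalize $P_{-\rho}$ along the unbounded direction. Since $P_{-\rho}$ conjugates via $\mathcal{F}_{x_3}$ to the family $\{P_{k,-\rho}\}_{k\in\R}$ of operators acting on $L^2(\omega)$, and Lemma \ref{l1} supplies the fibered right-inverse $E_{k,\rho}$ with bounds uniform in $k$, the plan is to glue these fiber-wise parametrices together via Plancherel.

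Concretely, for $F\in L^2(\Omega)$ I would set
\[
(E_\rho F)(x',x_3):=\mathcal{F}_{x_3}^{-1}\!\left[k\mapsto E_{k,\rho}(\mathcal{F}_{x_3}F)(\cdot,k)\right]\!(x_3).
\]
Standard measurability arguments (the map $k\mapsto E_{k,\rho}$ is strongly measurable, since the constant-coefficient fundamental solutions from \cite{Ho2} depend continuously on $k$) ensure $E_\rho F$ is a well-defined element of $L^2(\Omega)$. The identity $P_{-\rho}E_\rho F=F$ follows from $\mathcal{F}_{x_3}P_{-\rho}=P_{k,-\rho}\mathcal{F}_{x_3}$ combined with \eqref{l1a}. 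The $L^2\to L^2$ bound \eqref{l2b} is immediate from Plancherel and \eqref{l1b}:
\[
\norm{E_\rho F}_{L^2(\Omega)}^2=\int_\R \norm{E_{k,\rho}F_k}_{L^2(\omega)}^2\,dk\leq (C\rho^{-1})^2\int_\R\norm{F_k}_{L^2(\omega)}^2\,dk=(C\rho^{-1})^2\norm{F}_{L^2(\Omega)}^2.
\]

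For \eqref{l2c}--\eqref{l2d}, the key is to exploit the anisotropic definition $H^2(\Omega)=L^2(\R;H^2(\omega))\cap L^2(\omega;H^2(\R))$ introduced just after the BVP \eqref{eq1}. The tangential piece is handled by applying Plancherel in $x_3$ followed by the $H^2(\omega)$-bound in \eqref{l1d}:
\[
\norm{E_\rho F}_{L^2(\R;H^2(\omega))}^2=\int_\R\norm{E_{k,\rho}F_k}_{H^2(\omega)}^2\,dk\leq (C\rho)^2\norm{F}_{L^2(\Omega)}^2.
\]
For the longitudinal piece, I would use that $\mathcal{F}_{x_3}[E_\rho F(x',\cdot)](k)=E_{k,\rho}F_k(x')$ together with $(1+k^2)^2\lesssim 1+k^4$, so that
\[
\norm{E_\rho F}_{L^2(\omega;H^2(\R))}^2\lesssim \int_\R\left(\norm{E_{k,\rho}F_k}_{L^2(\omega)}^2+\norm{k^2E_{k,\rho}F_k}_{L^2(\omega)}^2\right)dk,
\]
and here the two uniform bounds $\norm{E_{k,\rho}}_{\mathcal B(L^2(\omega))}\leq C\rho^{-1}$ and $\norm{k^2E_{k,\rho}}_{\mathcal B(L^2(\omega))}\leq C\rho$ from \eqref{l1b}--\eqref{l1d} close the estimate by a constant times $\rho^2\norm{F}_{L^2(\Omega)}^2$.

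The only genuinely delicate point is that neither $\norm{E_{k,\rho}}_{\mathcal B(L^2(\omega);H^2(\omega))}$ nor $\norm{k^2 E_{k,\rho}}_{\mathcal B(L^2(\omega))}$ alone suffices to bound the full $H^2(\Omega)$-norm—one really needs both simultaneously, which is why the two distinct estimates in Lemma \ref{l1} were stated. This is precisely what motivates the use of the anisotropic space $H^2(\Omega)$ rather than the isotropic one; once this is observed, the proof reduces to Plancherel and bookkeeping. The construction is independent of any boundary condition on $\pd\omega\times\R$, which is consistent with the fact that CGO solutions in this first step are produced without conditions on $\Gamma$.
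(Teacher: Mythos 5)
Your proposal is correct and follows essentially the same route as the paper: conjugation by $\mathcal F_{x_3}$, the fiber-wise right inverses $E_{k,\rho}$ of Lemma \ref{l1}, Plancherel for \eqref{l2b}, and the combination of the $H^2(\omega)$-bound with the $\norm{k^2E_{k,\rho}}_{\mathcal B(L^2(\omega))}$-bound from \eqref{l1d} to control the anisotropic $H^2(\Omega)$-norm. The only additions are your explicit remarks on measurability of $k\mapsto E_{k,\rho}$ and on why both estimates in \eqref{l1d} are needed, which the paper leaves implicit.
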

\begin{proof}
According to Lemma \ref{l1}, we can define $E_\rho$ on $L^2(\Omega)$  by
\[E_{\rho}F:=\Omega\ni(x',x_3)\mapsto\mathcal F_k^{-1}\left(E_{k,\rho}\mathcal F_{x_3} F(\cdot,k)\right)(x',x_3).\]
It is clear that \eqref{l1a} implies \eqref{l2a}. Moreover, we have
\[\norm{E_{\rho}F}^2_{L^2(\Omega)}=\int_{\R}\norm{E_{k,\rho}\mathcal F_{x_3} F(\cdot,k)}^2_{L^2(\omega)}dk\]
and  from \eqref{l1b} we get 
\[\norm{E_{\rho}F}^2_{L^2(\Omega)}\leq C^2\rho^{-2}\int_{\R}\norm{\mathcal F_{x_3} F(\cdot,k)}^2_{L^2(\omega)}dk=C^2\rho^{-2}\norm{F}^2_{L^2(\Omega)}.\]
From this estimate we deduce \eqref{l2b}. In view of \eqref{l1c}-\eqref{l1d}, we have $E_{\rho}\in \mathcal B(L^2(\Omega);H^2(\Omega))$ and, for all $F\in L^2(\Omega)$,  we have
$$\begin{aligned}\norm{E_{\rho}F}_{ H^2(\Omega)}^2&\leq C'\int_\R\left[\norm{E_{k,\rho}\mathcal F_{x_3} F(\cdot,k)}^2_{H^2(\omega)}+\norm{k^2E_{k,\rho}\mathcal F_{x_3} F(\cdot,k)}^2_{L^2(\omega)}\right]dk\\
\ &\leq C'C^2\rho^2\int_{\R}\norm{\mathcal F_{x_3} F(\cdot,k)}^2_{L^2(\omega)}dk=C'C^2\rho^2\norm{F}^2_{L^2(\Omega)},\end{aligned}$$
with $C'$ depending only on $\omega$.
This proves \eqref{l2c}-\eqref{l2d}.\end{proof}

Using this last result, we can build  geometric optics solutions of the form \eqref{CGO1}.\\
 \ \\
\textbf{Proof of Theorem \ref{t4}.} We start by recalling that
\bel{t4b}\begin{aligned}&-e^{\rho \theta\cdot x'}(-\Delta+q)e^{-\rho \theta\cdot x'}e^{i\rho \eta\cdot x}\chi\left(\rho^{-\frac{1}{4}}x_3\right)e^{-i\xi\cdot x}\\
&=-\left((|\xi|^2+q)\chi\left(\rho^{-\frac{1}{4}}x_3\right)-2i\eta_3\rho^{\frac{3}{4}}\chi'\left(\rho^{-\frac{1}{4}}x_3\right)+2i\xi_3\rho^{-\frac{1}{4}}\chi'\left(\rho^{-\frac{1}{4}}x_3\right)-\rho^{-\frac{1}{2}}\chi''\left(\rho^{-\frac{1}{4}}x_3\right)\right)e^{i\rho \eta\cdot x}e^{-i\xi\cdot x}\end{aligned}\ee
On the other hand, we have
$$\int_\R\abs{\chi\left(\rho^{-\frac{1}{4}}x_3\right)}^2dx_3=\rho^{\frac{1}{4}}\int_\R\abs{\chi(t)}^2dt$$
and we deduce that 
$$\norm{\chi\left(\rho^{-\frac{1}{4}}x_3\right)}_{L^2(\Omega)}= \norm{\chi}_{L^2(\R)}|\omega|^{\frac{1}{2}}\rho^{\frac{1}{8}}.$$
In the same way, one can check that
$$\norm{\chi\left(\rho^{-\frac{1}{4}}x_3\right)}_{L^2(\Omega)}+\norm{\chi'\left(\rho^{-\frac{1}{4}}x_3\right)}_{L^2(\Omega)}+\norm{\chi''\left(\rho^{-\frac{1}{4}}x_3\right)}_{L^2(\Omega)}\leq C\rho^{\frac{1}{8}},$$
with $C$ depending only on $\omega$ and $\chi$. Combining this with \eqref{t4b}, we find
\bel{t4c}\begin{aligned}&\norm{-e^{\rho \theta\cdot x'}(-\Delta+q)e^{-\rho \theta\cdot x'}e^{i\rho \eta\cdot x}\chi\left(\rho^{-\frac{1}{4}}x_3\right)e^{-i\xi\cdot x}}_{L^2(\Omega)}\\
&=C\left((|\xi|^2+\norm{q}_{L^\infty(\Omega)})\rho^{\frac{1}{8}}+2|\eta_3|\rho^{\frac{7}{8}}+2|\xi_3|\rho^{-\frac{1}{8}}+\rho^{-\frac{3}{8}}\right)\leq C\rho^{\frac{7}{8}},\end{aligned}\ee
with $C>0$ depending on $\omega$, $\xi$ and $\norm{q}_{L^\infty(\Omega)}$.
According to Lemma \ref{l2}, we can rewrite equation \eqref{eqGO1} as $$w_\rho=-E_{\rho}\left(qw_\rho+e^{\rho \theta\cdot x'}(-\Delta+q)e^{-\rho \theta\cdot x'}e^{i\rho \eta\cdot x}\chi\left(\rho^{-\frac{1}{4}}x_3\right)e^{-i\xi\cdot x}\right),$$ with $E_{\rho}\in\mathcal B(L^2(\Omega))$ given by Lemma \ref{l2}.
For this purpose, we will use a standard fixed point argument associated to the map
\begin{align*}
\mathcal G:  L^2(\Omega) & \to L^2(\Omega), 
\\ 
G &\mapsto -E_{\rho}\left[qG+e^{\rho \theta\cdot x'}e^{-i\rho \eta\cdot x}(-\Delta+q)e^{-\rho \theta\cdot x'}e^{i\rho \eta\cdot x}\chi\left(\rho^{-\frac{1}{4}}x_3\right)e^{-i\xi\cdot x}\right].
\end{align*}
Indeed, in view of \eqref{l2b} and \eqref{t4c}, we have
$$\norm{\mathcal Gw}_{L^2(\Omega)}\leq C\rho^{-\frac{1}{8}}+C\rho^{-1}\norm{w}_{L^2(\Omega)},\quad w\in L^2(\Omega),$$
$$\norm{\mathcal Gw_1-\mathcal Gw_2}_{L^2(\Omega)}\leq \norm{E_{\rho}[q(w_1-w_2)]}_{L^2(\Omega)}\leq C\rho^{-1}\norm{w_1-w_2}_{L^2(\Omega)},\quad w\in L^2(\Omega),$$
with $C$ depending on $\omega$, $\xi$ and $\norm{q}_{L^\infty(\Omega)}$.
Therefore, fixing $M_1>0$, there  exists $\rho_0>1$ such that for $\rho\geq \rho_0$ the map $\mathcal G$ admits a unique fixed point $w_\rho$ in $\{w\in L^2(\Omega): \norm{w}_{L^2(\Omega)}\leq M_1\}$. In addition, condition \eqref{l2b}-\eqref{l2d} imply that $w_\rho\in H^2(\Omega)$ fulfills \eqref{CGO2}. This completes the proof of Theorem \ref{t4}. \qed

\section{Carleman estimate}
\label{sec3}
In this section we derive a Carleman estimate for the Laplace operator in the unbounded cylindrical domain $\Omega$. We  consider first a  Carleman inequality similar to \cite[Lemma 2.1]{BU} for unbounded cylindrical domains.

\begin{proposition}
\label{p2} 
Let $\theta \in \mathbb S^1$. Then, there exists $d>0$ depending only on $\omega$ such that  the estimate
\bea
& & \frac{8\rho^2}{d} \| e^{-\rho \theta \cdot x'} u \|_{L^2(\Omega)}^2 + 2 \rho \| e^{-\rho \theta \cdot x'} (\theta\cdot \nu)^{1 \slash 2} \pd_\nu u \|_{L^2(\partial\omega_\theta^+\times\R)}^2
\nonumber \\
& \leq & \| e^{-\rho \theta \cdot x'} \Delta u \|_{L^2(\Omega)}^2 + 2 \rho  \| e^{-\rho \theta \cdot x'} | \theta\cdot \nu|^{1 \slash 2} \pd_\nu u \|_{L^2(\partial\omega_\theta^-\times\R)}^2,
\label{p2a}
\eea
holds for every $u \in H^2(\Omega)$ satisfying $u_{\vert \Gamma}=0$.
\end{proposition}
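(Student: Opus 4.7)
The plan is to reduce the estimate to the classical two-dimensional Bukhgeim-Uhlmann Carleman inequality on the bounded section $\omega$ by taking a partial Fourier transform in $x_3$, which is available because the weight $e^{-\rho\theta\cdot x'}$ is independent of $x_3$.

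First I set $v:=e^{-\rho\theta\cdot x'}u\in H^2(\Omega)$, which still satisfies $v_{|\Gamma}=0$, and check directly that
\[
e^{-\rho\theta\cdot x'}\Delta u = (\Delta+2\rho\theta\cdot\nabla'+\rho^2)v,
\]
while on $\Gamma$ the vanishing of $v$ yields $e^{-\rho\theta\cdot x'}\partial_\nu u = \partial_\nu v$. Writing $\hat v(x',k):=\mathcal F_{x_3}v(x',k)$, Plancherel converts all $L^2(\Omega)$-norms (resp.\ $L^2(\partial\omega_\theta^{\pm}\times\R)$-norms) into integrals over $k$ of the corresponding $L^2(\omega)$-norms (resp.\ $L^2(\partial\omega_\theta^{\pm})$-norms) of $\hat v(\cdot,k)$, and turns the conjugated operator into $\Delta'+2\rho\theta\cdot\nabla'+\rho^2-k^2$ acting on $\hat v(\cdot,k)\in H^2(\omega)\cap H^1_0(\omega)$ for a.e.\ $k$.

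The heart of the argument is then, for each fixed $k\in\R$, an integral identity obtained by expanding $\|(\Delta'+\rho^2-k^2)\hat v + 2\rho\theta\cdot\nabla'\hat v\|_{L^2(\omega)}^2$ and computing the cross term by integration by parts on $\omega$. The lower order contribution $4\rho(\rho^2-k^2)\mathrm{Re}\langle \hat v,\theta\cdot\nabla'\hat v\rangle$ collapses to a boundary integral of $(\theta\cdot\nu')|\hat v|^2$ which vanishes because $\hat v=0$ on $\partial\omega$, while $4\rho\,\mathrm{Re}\langle \Delta'\hat v,\theta\cdot\nabla'\hat v\rangle$ reduces, after two successive integrations by parts together with the identity $\nabla'\hat v=(\partial_\nu\hat v)\nu'$ on $\partial\omega$, to $2\rho\int_{\partial\omega}(\theta\cdot\nu')|\partial_\nu\hat v|^2\,ds'$. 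Splitting this boundary integral along $\partial\omega_\theta^{\pm}$ and discarding the nonnegative bulk term $\|(\Delta'+\rho^2-k^2)\hat v\|_{L^2(\omega)}^2$ yields
\[
\|(\Delta'-k^2+2\rho\theta\cdot\nabla'+\rho^2)\hat v\|_{L^2(\omega)}^2+2\rho\!\int_{\partial\omega_\theta^-}\!|\theta\cdot\nu'|\,|\partial_\nu\hat v|^2\,ds' \geq 4\rho^2\|\theta\cdot\nabla'\hat v\|_{L^2(\omega)}^2 + 2\rho\!\int_{\partial\omega_\theta^+}\!(\theta\cdot\nu')|\partial_\nu\hat v|^2\,ds'.
\]

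Finally, since $\omega$ is bounded it is contained in a slab of finite width transverse to $\theta$, so the one-dimensional Poincaré inequality applied slice-by-slice to $\hat v(\cdot,k)\in H^1_0(\omega)$ furnishes $4\rho^2\|\theta\cdot\nabla'\hat v\|_{L^2(\omega)}^2\geq \tfrac{8\rho^2}{d}\|\hat v\|_{L^2(\omega)}^2$ for some constant $d>0$ depending only on $\omega$. Inserting this into the previous display and integrating in $k$ before re-applying Plancherel delivers the claimed inequality \eqref{p2a}. The main technical point I anticipate is justifying the integrations by parts cleanly on the unbounded domain $\Omega$; working on the bounded section $\omega$ for each fixed frequency $k$ sidesteps this issue entirely, and it also makes it transparent that the extra $k^2$ term does no harm because it contributes only to a nonnegative bulk term which may be dropped.
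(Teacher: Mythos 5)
Your argument is correct, and it reaches the estimate by a route that is a close but genuinely distinct cousin of the paper's. The paper also reduces to the two--dimensional cross--section, but it does so by slicing in the \emph{physical} variable $x_3$: it writes $e^{-\rho\theta\cdot x'}\Delta e^{\rho\theta\cdot x'}=P_+'+P_+^3+P_-$ with $P_+'=\Delta'+\rho^2$, $P_+^3=\partial_{x_3}^2$, $P_-=2\rho\theta\cdot\nabla'$, shows by a separate integration by parts that the cross term $2\,\re\langle P_+^3v,P_-v\rangle$ vanishes (using $\partial_{x_3}v_{|\Gamma}=0$), and then invokes \cite[Lemma 2.1]{BU} as a black box on each slice $v(\cdot,x_3)\in H^1_0(\omega)$ before integrating in $x_3$. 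You instead slice in \emph{frequency}, so that $\partial_{x_3}^2$ becomes $-k^2$ and is absorbed harmlessly into the ``positive'' part $\Delta'+\rho^2-k^2$, and you then reprove the two--dimensional inequality from scratch via the Rellich identity $2\,\re\langle\Delta'w,\theta\cdot\nabla'w\rangle=\int_{\partial\omega}(\theta\cdot\nu')|\partial_\nu w|^2\,ds'$ and the one--dimensional Poincar\'e inequality; your computation of the cross terms is right, and the step from $4\rho^2\|\theta\cdot\nabla'\hat v\|^2$ to $\tfrac{8\rho^2}{d}\|\hat v\|^2$ is exactly what the cited lemma does internally. What your version buys is that the $x_3$--direction is handled automatically (no separate cancellation argument for the $P_+^3$ cross term) and the whole computation lives on the bounded set $\omega$; what the paper's version buys is that the two--dimensional estimate need not be reproved. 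Two routine points you should still record: Plancherel applies on $\partial\omega_\theta^{\pm}\times\R$ because the normal derivative commutes with $\mathcal F_{x_3}$ and the surface measure is a product, and the constant $d$ can be taken to depend only on $\omega$ (e.g.\ via its diameter) so that it is uniform in $\theta\in\mathbb S^1$, as the statement requires.
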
 
\begin{proof} 
We start by proving \eqref{p2a} for $u\in\mathcal C^\infty_0(\R^3)$ satisfying $u_{\vert \Gamma}=0$.  The operator $e^{-\rho \theta \cdot x'}\Delta  e^{\rho \theta \cdot x'}$ decomposes into the sum  $  P_+' +P_+^3+ P_-$, with
$$  P_+' :=\Delta' + \rho^2\ \mbox{and}\ P_- :=2 \rho \theta \cdot \nabla',\ P_+^3 :=\pd_{x_3}^2, $$
where the symbol $\Delta'$ (resp., $\nabla'$) stands for the Laplace (resp., gradient) operator with respect to $x' \in \omega$. Thus, we get upon setting $v(x):=e^{-\rho \theta \cdot x'} u(x)$ that 
\beas
\| e^{-\rho \theta \cdot x'} \Delta u \|_{L^2(\Omega)}^2 & = & \| e^{-\rho \theta \cdot x'} \Delta e^{\rho \theta \cdot x'} v \|_{L^2(\Omega)}^2 \\
& = & \| (  P_+'+P^3_+ + P_-) v \|_{L^2(\Omega)}^2 \\
& = & \| ( P_+'+P^3_+) v \|_{L^2(\Omega)}^2 + \| P_- v \|_{L^2(\Omega)}^2 + 2 \re \langle P_+^3 v , P_- v \rangle_{L^2(\Omega)} + 2 \re \langle P_+' v , P_- v \rangle_{L^2(\Omega)}, 
\eeas
and hence
\bel{p2b}
\| P_- v \|_{L^2(\Omega)}^2 + 2 \re \langle P_+' v , P_- v \rangle_{L^2(\Omega)} \leq \| e^{-\rho \theta \cdot x'} \Delta u \|_{L^2(\Omega)}^2 - 2 \re \langle P_+^3 v , P_- v \rangle_{L^2(\Omega)}. 
\ee
Moreover, we find upon integrating by parts that
\bel{p2e} 
2\re \langle P_+^3 v , P_- v \rangle_{L^2(\Omega)} = -\rho \int_\R \int_\omega \nabla' \cdot (\abs{\pd_{x_3} v(x)}^2 \theta ) dx' dx_1=-\rho \int_{\Gamma} \abs{\pd_{x_3} v(x)}^2 \theta \cdot \nu(x)  d\sigma(x)=0 .
\ee
Here we used the fact that the condition  $v_{\vert \Gamma}=0$ implies $\pd_{x_3} v_{\vert \Gamma}=0$. 
Next, as the function $w:=v(\cdot,x_3) \in C^2(\overline{\omega})$ satisfies $w_{\vert \pd \omega}=0$ for a.e. $x_3 \in \R$, applying \cite[Lemma 2.1]{BU}, we deduce that there exists $d>0$ depending on $\omega$ such that
$$
\frac{8\rho^2}{d} \| w \|_{L^2(\omega)}^2 + 2\rho \int_{\pd \omega} e^{-2\rho  \theta \cdot x'} \theta\cdot \nu(x') \abs{\pd_\nu e^{\rho  \theta \cdot x'} w(x')}^2 d \sigma(x') 
\leq \| P_- w \|_{L^2(\omega)}^2 + 2 \re \langle P_+' w , P_-w \rangle_{L^2(\omega)}.
$$
It follows
\beas
& & \frac{8\rho^2}{d} \| e^{-\rho \theta \cdot x'} u(\cdot,x_3) \|_{L^2(\omega)}^2 + 2 \rho \int_{\pd \omega} e^{-2\rho \theta \cdot x'} \theta\cdot \nu(x) \abs{\pd_\nu u(\cdot,x_3)}^2d \sigma(x') \\
& \leq & \| P_- v(\cdot,x_3) \|_{L^2(\omega)}^2 + 2 \re \langle P_+' v(\cdot,x_3) , P_- v(\cdot,x_3) \rangle_{L^2(\omega)}.
\eeas
Thus,  integrating both sides of the above inequality with respect to $x_3\in \R$, we obtain
\bel{p2c}
\frac{8\rho^2}{d} \| e^{-\rho \theta \cdot x'} u \|_{L^2(\Omega)}^2 +2 \rho \int_{\Gamma} e^{-2\rho\theta\cdot x'} \theta \cdot \nu(x) \abs{\pd_\nu u(x)}^2 d \sigma(x)
\leq \| P_-v \|_{L^2(\Omega)}^2 + 2 \re \langle P_+' v , P_-v \rangle_{L^2(\Omega)}. 
\ee

Putting 
\eqref{p2b}-\eqref{p2c} together, we end up getting \eqref{p2a}. Finally, using the density of the space of restriction to $\Omega$ of function $u\in\mathcal C^\infty_0(\R^3)$ satisfying $u_{|\Gamma}=0$ in the space of function $u\in H^2(\Omega)$ satisfying $u_{|\Gamma}=0$, we deduce that \eqref{p2a} holds for $u\in H^2(\Omega)$.
\end{proof}

Using the fact that
$$  \abs{\Delta u}^2 \leq 2 \left( \abs{(-\Delta +q )u}^2+\norm{q}^2_{L^\infty(\Omega)}\abs{u}^2 \right), $$
we get
\beas
& & \left( \frac{4 \rho^2}{d} - \| q \|_{L^{\infty}(\Omega)}^2 \right) \| e^{-\rho \theta \cdot x'} u \|_{L^2(\Omega)}^2 + \rho \| e^{-\rho \theta \cdot x'} (\theta\cdot \nu)^{\frac{1}{2}} \pd_\nu u \|_{L^2(\check{\Gamma}_\theta^+)}^2
\nonumber \\
& \leq & \| e^{-\rho \theta \cdot x'} (-\Delta+q) u \|_{L^2(\Omega)}^2 + \rho  \| e^{-\rho \theta \cdot x'} |\theta \cdot \nu|^{\frac{1}{2}} \pd_\nu u \|_{L^2(\check{\Gamma}_\theta^-)}^2.
\eeas
As a consequence we obtain the following estimate.
\begin{follow}
\label{car1}
For $M>0$, let $q \in L^\infty(\Omega)$  satisfy $\| q \|_{L^\infty(\Omega)} \leq M$. Then, under the conditions of Proposition \ref{p2}, we have
\beas
& & \frac{2 \rho^2}{d} \| e^{-\rho \theta \cdot x'} u \|_{L^2(\Omega)}^2 + \rho \| e^{-\rho \theta \cdot x'} (\theta\cdot \nu)^{\frac{1}{2}} \pd_\nu u \|_{L^2(\check{\Gamma}_\theta^+)}^2
\nonumber \\
& \leq & \| e^{-\rho \theta \cdot x'} (-\Delta+q) u \|_{L^2(\Omega)}^2 + \rho  \| e^{-\rho \theta \cdot x'} |\theta\cdot \nu|^{\frac{1}{2}} \pd_\nu u \|_{L^2(\check{\Gamma}_\theta^-)}^2,
\eeas
provided $\rho \geq \rho_1:=M (d \slash 2)^{\frac{1}{2}}+1$.
\end{follow}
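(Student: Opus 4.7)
The plan is to derive this Carleman-type estimate with potential directly from the potential-free Carleman estimate \eqref{p2a} of Proposition \ref{p2}, by absorbing the zeroth-order perturbation $qu$ into the leading weighted $L^2$ term on the left-hand side. The only new ingredient beyond Proposition \ref{p2} is the pointwise algebraic inequality
$$|\Delta u|^2 = |(-\Delta+q)u - qu|^2 \leq 2|(-\Delta+q)u|^2 + 2\|q\|^2_{L^\infty(\Omega)}|u|^2,$$
which is precisely the bound indicated in the paragraph preceding the statement and follows from the elementary inequality $|a+b|^2\leq 2|a|^2+2|b|^2$.

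First, I would multiply the pointwise inequality above by the weight $e^{-2\rho\theta\cdot x'}$, integrate over $\Omega$, and use the assumption $\|q\|_{L^\infty(\Omega)}\leq M$ to get
$$\|e^{-\rho\theta\cdot x'}\Delta u\|^2_{L^2(\Omega)} \leq 2\|e^{-\rho\theta\cdot x'}(-\Delta+q)u\|^2_{L^2(\Omega)} + 2M^2\|e^{-\rho\theta\cdot x'}u\|^2_{L^2(\Omega)}.$$
Substituting this bound into the right-hand side of \eqref{p2a}, transferring the $2M^2\|e^{-\rho\theta\cdot x'}u\|^2_{L^2(\Omega)}$ term to the left-hand side, and dividing through by $2$ produces exactly the intermediate inequality already displayed in the excerpt, with the coefficient $\tfrac{4\rho^2}{d}-M^2$ in front of $\|e^{-\rho\theta\cdot x'}u\|^2_{L^2(\Omega)}$.

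Finally, I would observe that the threshold $\rho_1=M(d/2)^{1/2}+1$ is designed precisely so that for every $\rho\geq \rho_1$ one has $\rho^2\geq Md/2$, equivalently $\tfrac{2\rho^2}{d}\geq M^2$; hence $\tfrac{4\rho^2}{d}-M^2\geq \tfrac{2\rho^2}{d}$. This absorbs half of the leading coefficient and yields the stated Carleman estimate. No other modification of the boundary terms is required, since the factor $\rho$ multiplying each of them is unchanged by the absorption step.

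There is no genuine technical obstacle here; the corollary is a direct quantitative consequence of Proposition \ref{p2} combined with the triangle inequality. The only point that requires attention is verifying that the specific choice of $\rho_1$ is sharp enough to guarantee the absorption of the $M^2$ term while still retaining half of the $\tfrac{4\rho^2}{d}$ coefficient, which is an elementary check.
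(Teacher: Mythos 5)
Your argument is correct and is essentially the paper's own proof: the pointwise bound $|\Delta u|^2\leq 2|(-\Delta+q)u|^2+2\|q\|_{L^\infty(\Omega)}^2|u|^2$ inserted into \eqref{p2a}, followed by absorption of the $M^2$ term using $\rho\geq\rho_1$ so that $\tfrac{4\rho^2}{d}-M^2\geq\tfrac{2\rho^2}{d}$. Only a typographical slip: the condition $\tfrac{2\rho^2}{d}\geq M^2$ is equivalent to $\rho^2\geq M^2d/2$ (not $\rho^2\geq Md/2$), which is indeed what $\rho_1=M(d/2)^{1/2}+1$ guarantees.
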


\section{CGO solutions vanishing on parts of the boundary}
In this section we fix $q\in L^\infty(\Omega)$. From now on, for all $y\in\mathbb S^{1}$ and all  $r>0$, we set
\[\partial\omega_{+,r,y}=\{x\in\Gamma:\ \nu(x)\cdot y>r\},\quad\partial\omega_{-,r,y}=\{x\in\Gamma:\ \nu(x)\cdot y\leq r\}.\]
 Here and in the remaining of this text we always assume, without mentioning it, that $y$ and $r$ are chosen in such way that $\partial\omega_{\pm,r,\pm y}$ contain  a non-empty relatively open subset of $\partial\omega$. Without lost of generality we  assume that there exists $0<\epsilon<1$ such that for all $\theta\in\{y\in\mathbb S^{1}:|y-\theta_0|\leq\epsilon\} $ we have $\partial\omega_{-,\epsilon,-\theta}\subset K'$.
The goal of this section is to use the Carleman estimate \eqref{p2a} in order to build  solutions $u\in H_{\Delta}(\Omega)$ to 
\begin{equation}
\label{(5.1)}
\left\{
\begin{array}{l}
-\Delta u+qu=0\ \ \textrm{in }  \Omega,
\\

u=0,\quad \ \textrm{on } \partial\omega_{+,\epsilon/2,-\theta}\times\R,
\end{array}
\right.
\end{equation}
 of the form
\bel{CGO1a}
u(x',x_3)=e^{\rho \theta\cdot x'}e^{-i\rho \eta\cdot x}\left(\chi\left(\rho^{-\frac{1}{4}}x_3\right)+z_\rho(x)\right),\quad x=(x',x_3)\in\Omega.\ee
Here $\theta\in\{y\in\mathbb S^{1}:|y-\theta_0|\leq\epsilon\} $,  $z_\rho \in e^{-\rho \theta\cdot x'}e^{i\rho \eta\cdot x}H_{\Delta}(\Omega)$ fulfills   $$z_\rho(x',x_3)=-\chi\left(\rho^{-\frac{1}{4}}x_3\right),\quad (x',x_3)\in\partial\omega_{+,\epsilon/2,-\theta}\times\R$$ and
\bel{CGO1b}
\| z_\rho \|_{L^2(\Omega)}\leq C\rho^{-\frac{1}{8}},
\ee
with $C$ depending on $K'$, $\Omega$ and any $M\geq\norm{q}_{L^\infty(\Omega)}$. Since $(\partial\omega\setminus K')\subset(\partial\omega\setminus \partial\omega_{-,\epsilon,-\theta})=\partial\omega_{+,\epsilon,-\theta}$,  it is clear that condition \eqref{(5.1)} implies supp$(\mathcal T_0 u)\subset K$  (recall that for $v\in\mathcal C^\infty_0(\overline{\Omega})$, $\mathcal T_{0}v=v_{|\Gamma}$).

The main result of this section can be stated as follows.

\begin{theorem}\label{t3} Let $q\in L^\infty(\Omega)$, $\theta\in\{y\in\mathbb S^{1}:|y-\theta_0|\leq\epsilon\} $. For all $\rho> \rho_1$,  one can find a solution $u\in H_{\Delta}(\Omega)$ of \eqref{(5.1)} taking the form \eqref{CGO1a} with $z_\rho$ satisfying \eqref{CGO1b}. Here $\rho_1$ denotes the constant introduced at the end of Corollary \ref{car1}. \end{theorem}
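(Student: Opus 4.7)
My plan follows the Carleman--Hahn--Banach duality scheme pioneered by \cite{BU}, using Corollary \ref{car1} as the main analytic input. First I would conjugate: setting $u = e^{\rho\theta\cdot x'-i\rho\eta\cdot x}v$ with $v = \chi(\rho^{-1/4}x_3) + z_\rho$, the orthogonality relations \eqref{orth} eliminate the zeroth-order (in derivatives) $\rho^{2}$-term of the conjugated Laplacian, so that $(-\Delta+q)u=0$ is equivalent to
\[ L_\rho v := -\Delta v - 2\rho\,\theta\cdot\nabla' v + 2i\rho\,\eta\cdot\nabla v + qv = 0 \quad\text{in }\Omega. \]
Setting $F_\rho := -L_\rho\chi(\rho^{-1/4}x_3)$, the problem reduces to producing $z_\rho \in L^2(\Omega)$ such that $L_\rho z_\rho = F_\rho$ in $\mathcal{D}'(\Omega)$, with the Dirichlet-type condition $z_\rho = -\chi(\rho^{-1/4}x_3)$ on $\partial\omega_{+,\epsilon/2,-\theta}\times\R$ (interpreted via the $H_\Delta(\Omega)$-trace theory recalled in the introduction), and $\|z_\rho\|_{L^2(\Omega)} \leq C\rho^{-1/8}$. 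A direct computation, together with the bounds $\|\chi^{(k)}(\rho^{-1/4}\cdot)\|_{L^2(\R)} \leq C\rho^{1/8}$ already used in the proof of Theorem \ref{t4}, gives $\|F_\rho\|_{L^2(\Omega)} \leq C\rho^{7/8}$ and $\|\chi(\rho^{-1/4}x_3)\|_{L^2(\partial\omega_{+,\epsilon/2,-\theta}\times\R)} \leq C\rho^{1/8}$.

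For the construction itself, I would apply Corollary \ref{car1} to $L_\rho^*$ (which is the conjugation of $P^* := -\Delta + \bar q$ by $e^{-\rho\theta\cdot x' - i\rho\eta\cdot x}$, so the Carleman estimate for $P^*$ with weight $e^{\rho\theta\cdot x'}$ translates, the unimodular factor $e^{-i\rho\eta\cdot x}$ being harmless, into one for $L_\rho^*$ with trivial weight): for every $\psi \in H^{2}(\Omega)$ with $\psi_{|\Gamma} = 0$ and $\partial_\nu\psi_{|\partial\omega_\theta^+\times\R} = 0$ and $\rho$ large enough,
\[ \rho\|\psi\|_{L^2(\Omega)} + \rho^{1/2}\||\theta\cdot\nu|^{1/2}\partial_\nu\psi\|_{L^2(\partial\omega_\theta^-\times\R)} \leq C\|L_\rho^*\psi\|_{L^2(\Omega)}. \]
Call $\mathcal{E}$ the space of such $\psi$, and define the antilinear functional
\[ \ell(\psi) := \int_\Omega F_\rho\,\overline{\psi}\,dx - \int_{\partial\omega_{+,\epsilon/2,-\theta}\times\R} \chi(\rho^{-1/4}x_3)\,\overline{\partial_\nu\psi}\,d\sigma,\qquad \psi\in\mathcal{E}. \]
Cauchy--Schwarz combined with the Carleman bound, the above estimates on $\|F_\rho\|$ and $\|\chi\|$, and the fact that $|\theta\cdot\nu|\geq \epsilon/2$ on $\partial\omega_{+,\epsilon/2,-\theta}\times\R$, yields $|\ell(\psi)| \leq C\rho^{-1/8}\|L_\rho^*\psi\|_{L^2(\Omega)}$. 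Since $\psi\mapsto L_\rho^*\psi$ is injective on $\mathcal{E}$ by the Carleman bound, the map $L_\rho^*\psi\mapsto\ell(\psi)$ is a well-defined bounded antilinear form on $\{L_\rho^*\psi : \psi\in\mathcal{E}\} \subset L^2(\Omega)$; Hahn--Banach extends it to all of $L^2(\Omega)$ and Riesz representation produces $z_\rho \in L^2(\Omega)$ with $\|z_\rho\|_{L^2} \leq C\rho^{-1/8}$ satisfying $\int_\Omega z_\rho \overline{L_\rho^*\psi}\,dx = \ell(\psi)$ for every $\psi\in\mathcal{E}$. Testing against $\psi\in\mathcal{C}^\infty_0(\Omega)$ forces $L_\rho z_\rho = F_\rho$ in $\mathcal{D}'(\Omega)$; testing against $\psi\in\mathcal{E}$ with varying Neumann profiles on $\partial\omega_\theta^-\times\R$ then forces the boundary trace of $z_\rho$ to equal $-\chi(\rho^{-1/4}x_3)$ on $\partial\omega_{+,\epsilon/2,-\theta}\times\R$ (and $0$ elsewhere on $\partial\omega_\theta^-\times\R$). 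Un-conjugating produces the required $u\in H_\Delta(\Omega)$ of the form \eqref{CGO1a} solving \eqref{(5.1)}.

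The main obstacle I expect is the rigorous treatment of the trace condition: since $z_\rho$ is only $L^2(\Omega)$ a priori, the identity $z_\rho = -\chi(\rho^{-1/4}x_3)$ on $\partial\omega_{+,\epsilon/2,-\theta}\times\R$ is meaningful only through the $H_\Delta$-trace theory of the introduction (once one notes that $\Delta z_\rho \in L^2(\Omega)$ follows from $L_\rho z_\rho = F_\rho$, so that $\mathcal{T}_0 z_\rho \in H^{-1/2}(\Gamma)$ is well defined), and extracting the trace identity from the Hahn--Banach output requires a careful choice of test functions $\psi\in\mathcal{E}$ with Neumann data localised in disjoint pieces of $\partial\omega_\theta^-\times\R$. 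A secondary technical point is justifying the integration-by-parts and density arguments in the unbounded setting: Proposition \ref{p2} is already uniform in $x_3\in\R$, and the density of restrictions to $\Omega$ of $\mathcal{C}^\infty_0(\R^3)$-functions vanishing on $\Gamma$ that was invoked at the end of its proof extends to $\mathcal{E}$ (adding the vanishing condition of the normal derivative on $\partial\omega_\theta^+\times\R$), which suffices; finiteness of the boundary integrals above is ensured by the compact support in $x_3$ of the cutoff $\chi(\rho^{-1/4}x_3)$.
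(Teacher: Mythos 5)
Your argument is correct and follows essentially the same route as the paper: the paper packages the Carleman--Hahn--Banach duality step as Lemma \ref{l3} (solvability of $(-\Delta+q)y=v$ with prescribed Dirichlet data on the shadowed side $\partial\omega^*_{-,\theta}\times\R$ and the bound $\norm{y}_{-\rho}\leq C(\rho^{-1}\norm{v}_{-\rho}+\rho^{-1/2}\norm{v_-}_{-\rho,\gamma^{-1},-})$), and then applies it with $v=-(-\Delta+q)e^{\rho\theta\cdot x'}e^{-i\rho\eta\cdot x}\chi(\rho^{-1/4}x_3)$ and a boundary datum smoothly cut off by a function $\psi$ supported in $\{\theta\cdot\nu<-\epsilon/3\}$ (which plays the role of your observation that $|\theta\cdot\nu|\geq\epsilon/2$ on the support of the datum), yielding the same exponents $\rho^{-1}\cdot\rho^{7/8}$ and $\rho^{-1/2}\cdot\rho^{1/8}$ and hence \eqref{CGO1b}. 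Your only deviations are cosmetic: you conjugate the operator before dualizing and you impose $\partial_\nu\psi=0$ on the illuminated side rather than carrying that boundary term in a product-space Hahn--Banach extension, which are equivalent implementations of the same scheme.
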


In order to prove existence of such solutions of \eqref{(5.1)} we need some preliminary tools and an intermediate result.
\subsection{Weighted spaces}

In this subsection we give the definition of some weighted spaces. We set $s\in\R$, we fix $\theta\in\{y\in\mathbb S^{1}:|y-\theta_0|\leq\epsilon\} $ and we denote by $\gamma$ the function defined on $\Gamma$ by
\[\gamma(x)=\abs{\theta\cdot\nu(x)},\quad x\in\Gamma.\]
We introduce  the spaces  $L_s(\Omega)$,  and for all non negative measurable function $h$ on $\Gamma$ the spaces $L_{s,h,\pm}$  defined respectively by
\[L_s(\Omega)=e^{-s\theta\cdot x'}L^2(\Omega),\quad L_{s,h,\pm}=\{f:\ e^{s\theta\cdot x'}h^{{1\over2}}(x)f\in L^2(\omega_{\pm,\theta}\times\R)\}\]
with the associated norm
\[\norm{u}_s=\left(\int_\Omega e^{2s\theta\cdot x'}\abs{u}^2dx\right)^{\frac{1}{2}},\quad u\in L_s(\Omega),\]
\[\norm{u}_{s,h,\pm}=\left(\int_{\partial\omega_{\pm,\theta}\times\R} e^{2s\theta\cdot x'}h(x)\abs{u}^2d\sigma(x')dx_3\right)^{\frac{1}{2}},\quad u\in L_{s,h,\pm}.\]

\subsection{Completion of the proof}

We set  the space
\[\mathcal D_0=\{v_{|\Omega}:\ v\in\mathcal C^2_0(\R^3), \ v_{\vert \Gamma}=0\}\]
and, in view of Proposition \ref{p2}, applying the Carleman estimate \eqref{p2a} to any $g\in \mathcal D_0$ we obtain
\begin{equation}\label{caca}\rho\norm{g}_\rho+\rho^{\frac{1}{2}}\norm{\partial_\nu g}_{\rho,\gamma,-}\leq C(\norm{(-\Delta+q)g}_\rho+\norm{\partial_\nu g}_{\rho,\rho\gamma,+}),\quad \rho\geq \rho_1.\end{equation}
We introduce also the space
\[\mathcal M=\{((-\Delta+q) v_{|\Omega},\partial_\nu v_{\vert\partial\omega_{+,\theta}\times\R}):\ v\in\mathcal D_0\}\]
and  think of $\mathcal M$ as a subspace of $L_\rho(\Omega)\times L_{\rho,\rho\gamma,+}$. Combining the Carleman estimate \eqref{caca} with a classical application of the Hahn Banach theorem (see \cite[Proposition 7.1]{KSU} and \cite[Lemma 3.2]{CKS2} for more detail) to a suitable linear form defined on $\mathcal M$, we obtain the following intermediate result.

\begin{lemma}\label{l3}  We fix $\partial\omega_{-,\theta}^*=\{x\in\partial\omega:\ \theta\cdot\nu(x)<0\}$. Given $\rho\geq \rho_1$, with $\rho_1$ the constant of Corollary \ref{car1}, and
\[v\in L_{-\rho}(\Omega),\quad v_-\in L_{-\rho,\gamma^{-1},-},\]
there exists  $y\in L_{-\rho}(\Omega)$ such that:\\
1) $-\Delta y+qy=v$ in $\Omega$,\\
2) $y_{\vert \partial\omega_{-,\theta}^*\times\R}=v_-$,\\
3) $\norm{y}_{-\rho}\leq  C\left(\rho^{-1}\norm{v}_{-\rho}+\rho^{-\frac{1}{2}}\norm{v_-}_{-\rho,\gamma^{-1},-}\right)$ with $C$ depending on $\Omega$, $M\geq\norm{q}_{L^\infty(\Omega)}$.
\end{lemma}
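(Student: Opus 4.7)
The plan is to implement the classical Hahn--Banach duality scheme, now adapted to the unbounded cylinder $\Omega$ via the Carleman inequality \eqref{caca}. On the subspace
$$\mathcal M = \{\bigl((-\Delta+q)g,\ \partial_\nu g|_{\partial\omega_{+,\theta}\times\R}\bigr) : g \in \mathcal D_0\} \subset L_\rho(\Omega) \times L_{\rho,\rho\gamma,+},$$
I would define the linear form
$$\ell\bigl((-\Delta+q)g,\ \partial_\nu g|_{\partial\omega_{+,\theta}\times\R}\bigr) := \int_\Omega v g\, dx + \int_{\partial\omega_{-,\theta}^*\times\R} v_-\, \partial_\nu g\, d\sigma,$$
motivated by the following heuristic: for any $g \in \mathcal D_0$, if $y$ were the desired solution of (1)--(2), formal integration by parts of $\int_\Omega (-\Delta+q)g\cdot y\,dx$ (using $g|_\Gamma=0$) would produce exactly the right-hand side above, up to a residual boundary integral $\int_{\partial\omega_{+,\theta}\times\R} y\,\partial_\nu g\,d\sigma$, which will be absorbed by the second factor of the product space when we apply Riesz.

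Next I would invoke Corollary \ref{car1} to bound $\rho\|g\|_\rho$ and $\rho^{1/2}\|\partial_\nu g\|_{\rho,\gamma,-}$ by $\|(-\Delta+q)g\|_\rho + \|\partial_\nu g\|_{\rho,\rho\gamma,+}$. Combining with the Cauchy--Schwarz inequalities
$$\biggl|\int_\Omega v g\,dx\biggr| \leq \|v\|_{-\rho}\|g\|_\rho, \qquad \biggl|\int_{\partial\omega_{-,\theta}^*\times\R} v_-\,\partial_\nu g\,d\sigma\biggr| \leq \|v_-\|_{-\rho,\gamma^{-1},-}\|\partial_\nu g\|_{\rho,\gamma,-},$$
which come from the dualities $(L_\rho)'\cong L_{-\rho}$ and $(L_{\rho,\gamma,-})'\cong L_{-\rho,\gamma^{-1},-}$ under the unweighted pairing, one gets
$$\bigl|\ell(F,G)\bigr| \leq C\bigl(\rho^{-1}\|v\|_{-\rho} + \rho^{-1/2}\|v_-\|_{-\rho,\gamma^{-1},-}\bigr)\bigl(\|F\|_\rho + \|G\|_{\rho,\rho\gamma,+}\bigr),$$
which both shows that $\ell$ is well defined on $\mathcal M$ (its value depends only on the coordinates $F=(-\Delta+q)g$ and $G=\partial_\nu g|_{\partial\omega_{+,\theta}\times\R}$, not on $g$ itself, since $F=G=0$ forces $g=0$ through \eqref{caca}) and bounded with the same constant. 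Hahn--Banach extends $\ell$ to a continuous linear form on all of $L_\rho(\Omega)\times L_{\rho,\rho\gamma,+}$, and the Riesz representation theorem in this Hilbert product yields a pair $(y,h) \in L_{-\rho}(\Omega)\times L_{-\rho,(\rho\gamma)^{-1},+}$ such that
$$\ell(F,G) = \int_\Omega F y\, dx + \int_{\partial\omega_{+,\theta}\times\R} G h\, d\sigma, \qquad (F,G) \in L_\rho(\Omega)\times L_{\rho,\rho\gamma,+},$$
with $\|y\|_{-\rho}$ controlled by the displayed operator norm of $\ell$, which is precisely conclusion (3).

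It remains to verify (1) and (2). Testing with $g \in \mathcal C^\infty_0(\Omega)\subset \mathcal D_0$ kills both boundary terms and leaves $\int_\Omega y(-\Delta+q)g\,dx = \int_\Omega v g\,dx$, so $-\Delta y + qy = v$ in $\mathcal D'(\Omega)$; this gives (1) and also places $y$ locally in $H_\Delta$, so that its trace on $\Gamma$ is well defined through the operator $\mathcal T_0$ recalled in the introduction. Then, testing with a general $g \in \mathcal D_0$ and subtracting the Green identity $\int_\Omega y(-\Delta+q)g\,dx = \int_\Omega v g\,dx + \int_\Gamma y\,\partial_\nu g\,d\sigma$ from the Riesz representation of $\ell$ yields
$$\int_{\partial\omega_{-,\theta}^*\times\R}(y-v_-)\,\partial_\nu g\,d\sigma + \int_{\partial\omega_{+,\theta}\times\R}(y+h)\,\partial_\nu g\,d\sigma = 0$$
for every $g \in \mathcal D_0$. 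The main obstacle I anticipate is the density step at the end: one must argue that the normal traces $\partial_\nu g|_{\partial\omega_{-,\theta}^*\times\R}$ for $g \in \mathcal D_0$ are rich enough and, crucially, can be prescribed \emph{independently} of $\partial_\nu g|_{\partial\omega_{+,\theta}\times\R}$, so that the two integrals must vanish separately. This is a routine construction using prescribed Cauchy data on a separating hypersurface in $\omega$ together with a cutoff in $x_3$ that keeps $g$ compactly supported; once it is in place, the first integral forces $y = v_-$ on $\partial\omega_{-,\theta}^*\times\R$, giving (2) and completing the proof.
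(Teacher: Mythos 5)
Your argument is correct and follows essentially the same route as the paper, which proves Lemma \ref{l3} precisely by combining the Carleman estimate \eqref{caca} with the Hahn--Banach/Riesz duality scheme of \cite[Proposition 7.1]{KSU} and \cite[Lemma 3.2]{CKS2} applied to a linear form on $\mathcal M$. The only blemishes are a sign in your Green identity (for $g_{\vert\Gamma}=0$ the boundary term is $-\int_\Gamma y\,\partial_\nu g\,d\sigma$, so with consistent signs one defines $\ell$ with $-\int v_-\,\partial_\nu g$ to land on $y=v_-$ rather than $-v_-$), and the ``independence'' worry at the end, which is resolved simply by testing with $g$ whose normal trace is supported in the open set $\partial\omega^*_{-,\theta}\times\R$, so that the integral over $\partial\omega_{+,\theta}\times\R$ vanishes identically.
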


Armed with this lemma we are now in position to prove Theorem \ref{t3}. \\
\ \\
\textbf{Proof of Theorem \ref{t3}.} We need to consider $z_\rho$  satisfying
\begin{equation}
\label{w1}
\left\{
\begin{array}{l}z_\rho\in L^2(\Omega) \\
(-\Delta+q) (e^{\rho \theta\cdot x'}e^{-i\rho \eta\cdot x}z_\rho)=-(-\Delta+q)e^{\rho \theta\cdot x'}e^{-i\rho \eta\cdot x}\chi\left(\rho^{-\frac{1}{4}}x_3\right)\ \ \textrm{in }\Omega
\\
z_\rho=-\chi\left(\rho^{-\frac{1}{4}}x_3\right)\quad \textrm{on }\partial\omega_{+,\epsilon/2,-\theta}\times\times\R.
\end{array}
\right.\end{equation}
Let $\psi\in\mathcal C^\infty_0(\R^2)$ be such that   supp$(\psi)\cap\partial\omega\subset \{x\in\partial\omega:\ \theta\cdot\nu(x)<-\epsilon/3\}$ and $\psi=1$ on $\{x\in\partial\omega:\ \theta\cdot\nu(x)<-\epsilon/2\}=\partial\omega_{+,\epsilon/2,-\theta}$. Choose $v_-(x',x_3)=-e^{\rho \theta\cdot x'}e^{-i\rho \eta\cdot x}\chi\left(\rho^{-\frac{1}{4}}x_3\right)\psi(x')$, $x\in\partial\omega_{-,\theta}\times\R$.  Since $v_-(x)=0$ for $x\in \{x\in\Gamma:\ \theta\cdot\nu(x)\geq-\epsilon/3\}\times\R$  we have
$v_-\in L_{-\rho,\gamma^{-1},-}$. Fix also $$v(x)=-(-\Delta+q)e^{\rho \theta\cdot x'}e^{-i\rho \eta\cdot x}\chi\left(\rho^{-\frac{1}{4}}x_3\right),\quad x\in\Omega.$$ From Lemma \ref{l3}, we deduce that there exists $h\in H_\Delta(\Omega)$ such that
\[
\left\{
\begin{array}{ll}
(-\Delta+q) h=v&  \mbox{in}\; \Omega,
\\
h(x)=v_-(x),& x\in\partial\omega_{-,\theta}\times\R.
\end{array}
\right.\]
Then, for $z_\rho=e^{-\rho \theta\cdot x'}e^{i\rho \eta\cdot x} h$ condition \eqref{w1} will be fulfilled.  Repeating some arguments similar to Theorem \ref{t4}, we obtain
$$\norm{e^{-\rho \theta\cdot x'}(-\Delta+q)e^{\rho \theta\cdot x'}e^{-i\rho \eta\cdot x}\chi\left(\rho^{-\frac{1}{4}}x_3\right)}_{L^2(\Omega)}\leq C\rho^{\frac{7}{8}},$$
with $C$ depending only on $\omega$ and $M\geq \norm{q}_{L^\infty(\Omega)}$.
Combining this with  condition 3) of Lemma \ref{l3} we get
\[\begin{aligned}\norm{z_\rho}_{L^2(\Omega)}=\norm{h}_{-\rho}&\leq C\left(\rho^{-1}\norm{v}_{-\rho}+\rho^{-\frac{1}{2}}\norm{v_-}_{-\rho,\gamma^{-1},-}\right)\\
\ &\leq C\left(\rho^{-\frac{1}{8}}+\rho^{-\frac{1}{2}}\norm{\psi\gamma^{-\frac{1}{2}}}_{L^2(\partial\omega_{-,\theta})}\norm{\chi\left(\rho^{-\frac{1}{4}}\cdot\right)}_{L^2(\R)}\right)\\
\ &\leq C\left(\rho^{-\frac{1}{8}}+\rho^{-\frac{3}{8}}\norm{\psi\gamma^{-\frac{1}{2}}}_{L^2(\partial\omega_{-,\theta})}\norm{\chi}_{L^2(\R)}\right)\\
\ &\leq C\rho^{-{1\over8}}\end{aligned}\]
with $C$ depending only on $\Omega$ and $\norm{q}_{L^\infty(\Omega)}$. Therefore,  estimate \eqref{CGO1b} holds.  Using the fact that $e^{-\rho \theta\cdot x'}e^{i\rho \eta\cdot x} z_\rho=h\in H_{\Delta}(\Omega)$, we deduce that  $u$ defined by \eqref{CGO1a} is lying in $H_{\Delta}(\Omega)$ and is a solution of \eqref{(5.1)}. This completes the proof of Theorem \ref{t3}.\qed

\section{Uniqueness result}
This section is devoted to the proof of Theorem \ref{t1}.  From now on we set $q=q_2-q_1$ on $\Omega$ and  we assume  that $q=0$ on $\R^{3}\setminus \Omega$. Without lost of generality we assume that  for all $\theta\in\{y\in\mathbb S^{1}:|y-\theta_0|\leq\epsilon\} $ we have $\partial\omega_{-,\epsilon,\theta}\subset G'$ with $\epsilon>0$ introduced in the beginning of the previous section. 
Let   $\rho >\max(\rho_0,\rho_1) $ and set $\theta\in\{y\in\mathbb S^{1}:|y-\theta_0|\leq\epsilon\} $, $\xi:=(\xi',\xi_3)\in\R^3$ satisfying $\xi_3\neq0$ and $\xi'\in\theta^{\bot}$. According to Theorem \ref{t4}, we can consider $u_1\in H^2(\Omega)$ solving $-\Delta u_1+q_1u_1=0$ on $\Omega$ taking the form \eqref{CGO1} with $w_\rho$ satisfying \eqref{CGO2}. In addition, in view of Theorem \ref{t3}, we can fix $u_2\in H_{\Delta}(\Omega)$ a solution of \eqref{(5.1)}, with $q=q_2$, of the form \eqref{CGO1a} with $e^{-\rho \theta\cdot x'}e^{i\rho \eta\cdot x}z_\rho\in H_{\Delta}(\Omega)$ satisfying \eqref{CGO1b}.
Fix $w_1\in H_\Delta (\Omega)$ solving
 \bel{eq3}
\left\{
\begin{array}{ll}
-\Delta w_1 +q_1w_1=0 &\mbox{in}\ \Omega,
\\

\mathcal T_{0}w_1=\mathcal T_{0}u_2. &

\end{array}
\right.
\ee
Then, $u=w_1-u_2$ solves
  \bel{eq4}
\left\{\begin{array}{ll}
-\Delta u +q_1u=(q_2-q_1)u_2 &\mbox{in}\ \Omega,
\\
u(x)=0 & \mathrm{on}\  \partial \Omega,\\

\end{array}\right.
\ee
and since $(q_2-q_1)u_2\in L^2(\Omega)$,  in view of \cite[Lemma 2.2]{CKS}, we deduce that $u\in H^2(\Omega)$. Using the fact that $u_1\in H^2(\Omega)$, we can apply the Green formula to get

\[\begin{aligned}\int_\Omega (q_2-q_1)u_2u_1dx&=\int_\Omega u_1(-\Delta u+q_1 u)dx-\int_\Omega u(-\Delta u_1+q_1u_1)dx\\
&=-\int_{\Gamma} \partial_\nu u u_1d\sigma(x)+\int_{\Gamma} \partial_\nu u_1 ud\sigma(x).\end{aligned}\]
On the other hand, we have $u_{|\Gamma}=0$ and, combining \eqref{t1a} with the fact that supp$\mathcal T_0u_2\subset K$, we deduce that $\partial_\nu u_{|G}=0$. It follows that
\begin{equation}\label{t1b} \int_\Omega qu_2u_1dx=-\int_{\Gamma\setminus G}\partial_\nu uu_1d\sigma(x).\end{equation}
In view of \eqref{CGO2}, by interpolation, we have
$$\norm{w_\rho}_{L^2(\Gamma)}\leq C\norm{w_\rho}_{H^{\frac{9}{16}}(\Omega)}\leq C\left(\norm{w_\rho}_{L^2(\Omega)}\right)^{\frac{23}{32}}\left(\norm{w_\rho}_{H^2(\Omega)}\right)^{\frac{9}{32}}\leq  C\rho^{\frac{7}{16}}.$$
Thus, applying  the Cauchy-Schwarz inequality to the first expression on the right hand side of this formula, 
we get
\[\begin{aligned}\abs{\int_{\Sigma\setminus G}\partial_\nu uu_1d\sigma(x)}&\leq\int_\R\int_{{\partial\omega}_{+,\epsilon,\theta}}\abs{\partial_\nu ue^{-\rho x'\cdot \theta}(e^{-i\xi\cdot x}\chi\left(\rho^{-\frac{1}{2}}x_3\right)+w_\rho)} d\sigma(x')dx_3 \\
 \ &\leq C\left(\int_{{\partial\omega}_{+,\epsilon,\theta}\times \R}\abs{e^{-\rho x'\cdot \theta}\partial_\nu u}^2d\sigma(x)\right)^{\frac{1}{2}}\left(\norm{\chi\left(\rho^{-\frac{1}{2}}\cdot\right)}_{L^2(\R)}+\norm{w_\rho}_{L^2(\Gamma)}\right)\\
\ &\leq C\rho^{\frac{7}{16}}\left(\int_{{\partial\omega}_{+,\epsilon,\theta}\times \R}\abs{e^{-\rho x'\cdot \theta}\partial_\nu u}^2d\sigma(x)\right)^{\frac{1}{2}}\end{aligned}\]
for some $C$ independent of $\rho$. Here we have used both \eqref{CGO2} and the fact that $(\Gamma\setminus G)\subset {\partial\omega}_{+,\epsilon,\theta}\times\R$. 
Combining this estimate with the Carleman estimate stated in Corollary \ref{car1}, we find
\begin{eqnarray}&&\abs{\int_\Omega(q_2-q_1)u_2u_1dx}^2\cr
&&\leq C\rho^{\frac{7}{8}}\int_{{\partial\omega}_{+,\epsilon,\theta}\times \R}\abs{e^{-\rho x'\cdot \theta}\partial_\nu u}^2d\sigma(x)\cr
&&\leq \epsilon^{-1}C\rho^{\frac{7}{8}}\int_{{\partial\omega}_{+,\theta}\times \R}\abs{e^{-\rho x'\cdot \theta}\partial_\nu u}^2|\nu \cdot\theta| d\sigma(x)\cr
&&\leq \epsilon^{-1}C\rho^{-\frac{1}{8}}\left(\int_\Omega\abs{ e^{-\rho x'\cdot \theta}(-\Delta +q_1)u}^2dx\right)\cr
&&\leq \epsilon^{-1}C\rho^{-\frac{1}{8}}\left(\int_\Omega\abs{ e^{-\rho x'\cdot \theta}qu_2}^2dx\right)\cr
&&\leq \epsilon^{-1}C\rho^{-\frac{1}{8}}\left(\norm{q}_{L^\infty(\Omega)}\norm{\chi}_{L^\infty(\R)} \int_\Omega|q(x)|dx +\norm{q}_{L^\infty(\Omega)}^2\norm{z_\rho}^2_{L^2(\Omega)}\right).\end{eqnarray}
Here $C>0$ stands for some generic constant independent of $\rho$.
Applying the fact that $q\in L^1(\Omega)$, we deduce that
\begin{equation}\label{t1c}\lim_{\rho\to+\infty}\int_\Omega qu_2u_1dx=0.\end{equation}
Moreover, we have
\[\int_{\Omega}qu_1u_2d x=\int_{\R^3}\chi^2(\rho^{-\frac{1}{4}} x_3)q(x)e^{-i\xi\cdot x}dx+ \int_{\Omega}Y(x) dx+\int_{\Omega}Z(x) dx\]
with $ Y(x)=q(x) e^{-i\rho \eta\cdot x}z_\rho(x)w_\rho(x)$ and $$Z(x)=q(x)\chi\left(\rho^{-\frac{1}{4}}x_3\right)\left[z_\rho e^{-ix\cdot\xi}+w_\rho e^{-i\rho \eta\cdot x} \right].$$
Applying the decay estimate given by \eqref{CGO2} and \eqref{CGO1b}, we obtain
$$\int_{\Omega}|Y(x)| dx\leq \norm{w_\rho}_{L^2(\Omega)}\norm{z_\rho}_{L^2(\Omega)}\leq C\rho^{-\frac{1}{4}}$$
$$\begin{aligned}\int_{\Omega}|Z(x)| dx&\leq \norm{q}_{L^2(\Omega)}\norm{\chi\left(\rho^{-\frac{1}{4}}\cdot\right)}_{L^\infty(\R)}(\norm{w_\rho}_{L^2(\Omega)}+\norm{z_\rho}_{L^2(\Omega)})\\
\ &\leq C\norm{q}_{L^\infty(\Omega)}^{\frac{1}{2}}\norm{q}_{L^1(\Omega)}^{\frac{1}{2}}\norm{\chi}_{L^\infty(\R)}\rho^{-\frac{1}{8}}.\end{aligned}$$
with $C$ independent of $\rho$. Combining this with \eqref{t1c}, we deduce that 
$$\lim_{\rho\to+\infty}\int_{\R^3}\chi^2(\rho^{-\frac{1}{4}} x_3)q(x)e^{-i\xi\cdot x}dx=0.$$
On the other hand, since $q\in L^1(\R^3)$ and $\chi(0)=1$, by the Lebesgue dominate convergence theorem, we find
$$\lim_{\rho\to+\infty}\int_{\R^3}\chi^2(\rho^{-\frac{1}{4}} x_3)q(x)e^{-i\xi\cdot x}dx=\int_{\R^3}q(x)e^{-i\xi\cdot x}dx.$$
This proves that, for all $\theta\in\{y\in\mathbb S^{1}:|y-\theta_0|\leq\epsilon\} $,  all $\xi'\in\R^2$ orthogonal to $\theta$ and all $\xi_3\in\R\setminus\{0\}$, we have
\bel{t1d}\mathcal F\left[\mathcal F_{x_3}q(\cdot,\xi_3)\right](\xi')=(2\pi)^{-1}\int_{\R^{2}}\mathcal F_{x_3}q(x',\xi_3)e^{-i\xi'\cdot x'}dx'=0.\ee
Since $q\in L^1(\R^3)$, $\xi_3\mapsto \mathcal F_{x_3}q(\cdot,\xi_3)$ is continuous from $\R$ to $L^1(\R^2)$ and
$$|\mathcal F_{x_3}q(\cdot,\xi_3)|\leq (2\pi)^{-\frac{1}{2}}\int_\R|q(\cdot,x_3)|dx_3,$$
 by the Fubini and the Lebesgue dominate convergence theorem, we deduce that \eqref{t1d} holds for all   $\xi'\in\R^2$ orthogonal to $\theta$ and all $\xi_3\in\R$. Now using the fact that for any $\xi_3\in\R$, $\mathcal F_{x_3}q(\cdot,\xi_3)$ is supported on $\overline{\omega}$ which is compact, we deduce, by analyticity of $\mathcal F\left[\mathcal F_{x_3}q(\cdot,\xi_3)\right]$, that $\mathcal F_{x_3}q(\cdot,\xi_3)=0$. This proves that $q=0$ which completes the proof of Theorem \ref{t1}.

\section{Applications}

In this section we will prove the three applications of Theorem \ref{t1} stated in Corollary \ref{c1}, \ref{c2} and \ref{c3}.

\subsection{Application to the Calder\'on problem}
This subsection is devoted to the proof of Corollary \ref{c1}. Applying the Liouville transform, we deduce that for $u$ the solution to \eqref{a-eq1},  $v:=a^{\frac{1}{2}} u$ solves the following BVP
$$
\left\{
\begin{array}{rcll} 
(-\Delta + q_a ) v & = & 0, & \mbox{in}\ \Omega,\\ 
v & = & a^{\frac{1}{2}} f, & \mbox{on}\ \Gamma,
\end{array}
\right.
$$
where we recall that $q_a:=a^{-\frac{1}{2}} \Delta (a^{\frac{1}{2}})$. Moreover, one can check that
$$
\Sigma_a f=a^{\frac{1}{2}} \Lambda_{q_a} a^{\frac{1}{2}}f - a^{\frac{1}{2}} \left( \pd_\nu a^{\frac{1}{2}} \right) f,\ f \in H^{\frac{1}{2}}(\Gamma) \cap a_1^{-\frac{1}{2}}(\mathcal H_K(\Gamma)),
$$
where $\Sigma_a$ is defined by  \eqref{ca2}.
From this and \eqref{ca3}-\eqref{ca4}, it then follows for every $f \in H^{\frac{1}{2}}(\Gamma) \cap a_1^{-\frac{1}{2}}(\mathcal H_K(\Gamma))$, that
$$
\Sigma_{a_j} f = a_1^{\frac{1}{2}}\Lambda_{q_j} a_1^{\frac{1}{2}} f - a_1^{\frac{1}{2}} \left( \pd_\nu a_1^{\frac{1}{2}} \right) f_{\vert G},\  j=1,2,
$$
where, for simplicity, $q_j$ stands for $q_{a_j}$. As a consequence, the condition $\Sigma_{a_1}=\Sigma_{a_2}$ implies
$$
(\Lambda_{q_1}-\Lambda_{q_2})f= a_1^{-\frac{1}{2}}(\Sigma_{a_1}-\Sigma_{a_2})a_1^{-\frac{1}{2}} f=0 ,\  f \in a_1^{\frac{1}{2}}H^{\frac{1}{2}}(\Gamma) \cap (\mathcal H_K(\Gamma)).
$$
In particular, this proves that $\Lambda_{q_1}=\Lambda_{q_2}$. Since $a_j \in \mathcal A$, $j=1,2$, it is clear that $q_j \in  L^\infty(\Omega)$ and $q_1-q_2\in L^1(\Omega)$. Then, 
according to Theorem \ref{t1}, we have $q_1=q_2$. Fixing $y:=a_1^{\frac{1}{2}}-a_2^{\frac{1}{2}}\in H^2_{loc}(\Omega)$ we deduce that $y$ satisfies
$$
\left\{
\begin{array}{rcll} 
(-\Delta + q_1 ) y & = & -a_2^{\frac{1}{2}}(q_1-q_2)=0, & \mbox{in}\ \Omega,\\ 
y_{|K\cap G}=\partial_\nu y_{|K\cap G}=0.
\end{array}
\right.
$$
Combining this with results of unique continuation for elliptic equations (e.g. \cite[Theorem 1]{SS}) we have $y=0$ and we deduce that $a_1=a_2$. This completes the proof of Corollary \ref{c1}.

\subsection{Recovery of coefficients that are known in the neighborhood of the boundary outside a compact set}
This subsection is devoted to the proof of Corollary \ref{c2}. For this purpose we assume that  the conditions of  Corollary \ref{c2} are fulfilled. Let us also introduce the following sets of functions

$$ S_q:=\{ u\in L^2(\Omega):\ -\Delta u+qu=0,\ \textrm{supp}(\mathcal T_0u)\subset K\},\quad Q_q:=\{ u\in L^2(\Omega):\ -\Delta u+qu=0\},$$
$$ S_{q,\gamma_1,\gamma_1'}:=\{ u\in L^2(\Omega):\ -\Delta u+qu=0,\ \textrm{supp}(\mathcal T_0u)\subset (K'\times[-R,R])\cup \gamma_1\cup\gamma_1'\},$$
$$ Q_{q,\gamma_2,\gamma_2'}:=\{ u\in L^2(\Omega):\ -\Delta u+qu=0,\ \textrm{supp}(\mathcal T_0u)\subset (\partial \omega\times[-R,R])\cup \gamma_2\cup\gamma_2'\}.$$

We consider first the following result of density for these spaces.

\begin{lemma}\label{l5}  The space $Q_{q_1,\gamma_2,\gamma_2'}$ (resp. $S_{q_2,\gamma_1,\gamma_1'}$) is dense in $Q_{q_1}$ (resp. $S_{q_2}$) for the topology induced by $L^2(\Omega\setminus(\Omega_{1,*}\cup\Omega_{2,*}))$.\end{lemma}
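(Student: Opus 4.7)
\emph{Proof plan.} My approach is a Hahn--Banach duality argument. The claimed density of $Q_{q_1,\gamma_2,\gamma_2'}$ in $Q_{q_1}$ in the $L^2(\Omega\setminus(\Omega_{1,*}\cup\Omega_{2,*}))$-topology is equivalent to the following statement: every $f\in L^2(\Omega)$ supported in $\Omega\setminus(\Omega_{1,*}\cup\Omega_{2,*})$ which satisfies $\int_\Omega fu\,dx=0$ for all $u\in Q_{q_1,\gamma_2,\gamma_2'}$ must also satisfy $\int_\Omega fu\,dx=0$ for all $u\in Q_{q_1}$. Since $0$ is not in the Dirichlet spectrum of $-\Delta+q_1$, I would fix $v\in H_\Delta(\Omega)$ as the unique solution of
\[
-\Delta v+q_1 v=f\text{ in }\Omega,\qquad \mathcal T_0 v=0,
\]
and use the generalised Green identity associated with the trace maps $\mathcal T_0,\mathcal T_1$ recalled in the introduction to obtain
\[
\int_\Omega f u\,dx=\langle \mathcal T_1 v,\mathcal T_0 u\rangle
\]
for every $u\in L^2(\Omega)$ solving $-\Delta u+q_1 u=0$.

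The orthogonality hypothesis then forces $\langle \mathcal T_1 v,g\rangle=0$ for every $g\in \sH(\Gamma)$ with $\supp g\subset (\partial\omega\times[-R,R])\cup\gamma_2\cup\gamma_2'$; combined with the bijectivity of $\mathcal T_0$ from the harmonic subspace $B$ onto $\sH(\Gamma)$, this yields that $\mathcal T_1 v$ vanishes on the open subset $\Gamma_0:=(\partial\omega\times(-R,R))\cup\gamma_2\cup\gamma_2'$ of $\Gamma$. To extend the vanishing of $\mathcal T_1 v$ to all of $\Gamma$, I would apply Cauchy uniqueness in the subdomains $\Omega_{1,*}$ and $\Omega_{2,*}$: since $f\equiv 0$ on each $\Omega_{j,*}$, the function $v$ solves the homogeneous equation there and has vanishing Cauchy data on the open piece $\gamma_2\subset\partial\Omega_{1,*}\cap\Gamma$ (resp.\ $\gamma_2'\subset\partial\Omega_{2,*}\cap\Gamma$). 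The unique continuation theorem of \cite{SS}, applied in the connected $\mathcal C^2$ domains $\Omega_{j,*}$, yields $v\equiv 0$ on $\Omega_{1,*}\cup\Omega_{2,*}$; hence $\mathcal T_1 v=0$ on $\Gamma\cap(\partial\Omega_{1,*}\cup\partial\Omega_{2,*})$, which contains $\partial\omega\times((-\infty,-R)\cup(R,+\infty))$, and together with the vanishing on $\Gamma_0$ this covers $\Gamma$ up to a null set. It follows that $\int_\Omega fu\,dx=0$ for every $u\in Q_{q_1}$, completing the Hahn--Banach argument.

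The density of $S_{q_2,\gamma_1,\gamma_1'}$ in $S_{q_2}$ is obtained by the same scheme, with $q_1,\gamma_2,\gamma_2'$ replaced by $q_2,\gamma_1,\gamma_1'$. The only difference is that $\mathcal T_0 u$ is now supported in $K=K'\times\R$ for every $u\in S_{q_2}$, so one only needs $\mathcal T_1 v=0$ on $K$ rather than on all of $\Gamma$; the direct orthogonality step delivers vanishing on $(K'\times(-R,R))\cup\gamma_1\cup\gamma_1'$, and since $\gamma_1\subset K'\times(-\infty,-R)$ and $\gamma_1'\subset K'\times(R,+\infty)$ lie in $\partial\Omega_{1,*}\cap\Gamma$ and $\partial\Omega_{2,*}\cap\Gamma$ respectively, the same Cauchy-uniqueness step forces $v\equiv 0$ on $\Omega_{1,*}\cup\Omega_{2,*}$ and hence $\mathcal T_1 v=0$ on the remaining portion of $K$. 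The main technical obstacle will be the rigorous justification of the generalised Green identity and of the ``localisation of the vanishing locus'' for $\mathcal T_1 v$ inside the low-regularity trace framework of $H_\Delta(\Omega)$ on an unbounded cylinder; this should however follow from the trace lemmas of \cite{BU,CKS2} and from the bijectivity of $\mathcal T_0:B\to\sH(\Gamma)$ recalled at the start of the paper.
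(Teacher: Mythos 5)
Your proposal is correct and follows essentially the same route as the paper: a Hahn--Banach duality argument, solving $(-\Delta+q_1)v=f$ with zero Dirichlet data, deducing that $\partial_\nu v$ vanishes on $(\partial\omega\times[-R,R])\cup\gamma_2\cup\gamma_2'$ from the orthogonality relations, propagating the vanishing into $\Omega_{1,*}$ and $\Omega_{2,*}$ by unique continuation, and concluding with the generalized Green identity. The only (immaterial) difference is that the paper integrates by parts over $\Omega\setminus(\Omega_{1,*}\cup\Omega_{2,*})$ using the vanishing Cauchy data on the internal interfaces $\partial\Omega_{j,*}$, whereas you integrate over all of $\Omega$ after noting that $\mathcal T_1 v$ vanishes on essentially all of $\Gamma$; note that the elliptic regularity $v\in H^2(\Omega)$ (available since $f\in L^2$ and $\mathcal T_0 v=0$) makes the trace bookkeeping you worry about at the end unproblematic.
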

\begin{proof} Due to the similarity of these two results, we consider only the proof of the density of $Q_{q_1,\gamma_2,\gamma_2'}$ in $Q_{q_1}$. For this purpose, assume the contrary. Then, there exist $g\in L^2(\Omega\setminus(\Omega_{1,*}\cup\Omega_{2,*}))$ and $v_0\in Q_{q_1}$ such that
\bel{l5a}\int_{\Omega\setminus(\Omega_{1,*}\cup\Omega_{2,*})} gvdx=0,\quad v\in Q_{q_1,\gamma_2,\gamma_2'},\ee
\bel{l5b}\int_{\Omega\setminus(\Omega_{1,*}\cup\Omega_{2,*})} gv_0dx=1.\ee
From now on, we extend g by $0$ to $\Omega$. Let $y\in H^2(\Omega)$ be the solution of
$$
\left\{
\begin{array}{rcll} 
(-\Delta + q_1 ) y & = & g, & \mbox{in}\ \Omega,\\ 
y & = & 0, & \mbox{on}\ \Gamma.
\end{array}
\right.
$$
Then, for any $v\in H^2(\Omega)\cap Q_{q_1,\gamma_2,\gamma_2'}$, we find
$$0=\int_\Omega g vdx=-\int_{\partial\omega\times[-R,R]}\partial_\nu y vd\sigma(x)-\int_{\gamma_2}\partial_\nu y vd\sigma(x)-\int_{\gamma_2'}\partial_\nu y vd\sigma(x).$$
Allowing $v\in H^2(\Omega)\cap Q_{q_1,\gamma_2,\gamma_2'}$ to be arbitrary, we deduce that 
\bel{l5c}\partial_\nu y(x)=0,\quad x\in (\partial\omega\times[-R,R])\cup \gamma_2\cup\gamma_2'.\ee
Therefore, $y$ satisfies 
$$
\left\{
\begin{array}{l} 
(-\Delta + q_1 ) y  = 0\ \  \mbox{in}\ \Omega_{1,*},\\ 
y_{|\gamma_2}=\partial_\nu y_{|\gamma_2}  = 0
\end{array}
\right.
$$
and the unique continuation property for elliptic equations implies that $y_{|\Omega_{1,*}}=0$. In the same way, we can prove that $y_{|\Omega_{2,*}}=0$ and we deduce that
$$y_{|\partial \Omega_{j,*}}=\partial_\nu y_{|\partial \Omega_{j,*}}=0,\quad j=1,2.$$
Combining this with \eqref{l5c}, we obtain 
$$y(x)=\partial_\nu y(x)=0,\quad x\in \partial(\Omega\setminus (\overline{ \Omega_{1,*}}\cup \overline{ \Omega_{2,*}})).$$
Now let us recall that, repeating the arguments used in \cite[Corollary  1.2]{BU}, one can check that, for any $y\in H^2(\Omega)$ and $z\in H_{\Delta}(\Omega)$, we have
$$\begin{aligned}&\int_{\Omega\setminus(\Omega_{1,*}\cup\Omega_{2,*})}  z\Delta y  dx-\int_{\Omega\setminus(\Omega_{1,*}\cup\Omega_{2,*})}  y\Delta z  dx\\
&=\left\langle \mathcal T_0z,\partial_\nu y\right\rangle_{H^{-\frac{1}{2}}(\Gamma\setminus(\Omega_{1,*}\cup\Omega_{2,*})),H^{\frac{1}{2}}(\Gamma\setminus(\Omega_{1,*}\cup\Omega_{2,*}))}-\left\langle \mathcal T_1z,y\right\rangle_{H^{-\frac{3}{2}}(\Gamma\setminus(\Omega_{1,*}\cup\Omega_{2,*})),H^{\frac{3}{2}}(\Gamma\setminus(\Omega_{1,*}\cup\Omega_{2,*}))}.\end{aligned}$$
Therefore, applying this integration by parts formula, we get
$$\int_{\Omega\setminus(\Omega_{1,*}\cup\Omega_{2,*})} gv_0dx=\int_{\Omega\setminus(\Omega_{1,*}\cup\Omega_{2,*})} (-\Delta + q_1 ) yv_0dx=0.$$
This contradicts \eqref{l5b} and completes the proof of the lemma.\end{proof}

Armed with this lemma we are now in position to complete the proof of Corollary \ref{c2}. \\
\ \\
\textbf{Proof of the Corollary \ref{c2}.} Let $u_1\in Q_{q_1,\gamma_2,\gamma_2'}$ and $u_2\in S_{q_2,\gamma_1,\gamma_1'}$. Repeating the linearization process described in Section 5 we deduce that $\Lambda_{q_1,R}^*=\Lambda_{q_2,R}^*$ implies
$$\int_{\Omega}(q_2-q_1)u_1u_2dx=0.$$
Then, \eqref{c2a} implies
\bel{c2e} 0=\int_{\Omega}(q_2-q_1)u_1u_2dx=\int_{\Omega\setminus(\Omega_{1,*}\cup\Omega_{2,*})}(q_2-q_1)u_1u_2dx.\ee
Combining this with the density result of Lemma \ref{l5} and applying again \eqref{c2a}, we deduce that
$$\int_{\Omega}(q_2-q_1)u_1u_2dx=\int_{\Omega\setminus(\Omega_{1,*}\cup\Omega_{2,*})}(q_2-q_1)u_1u_2dx=0,\quad u_1\in Q_{q_1},\ u_2\in S_{q_2}.$$
Finally, choosing $u_1,u_2$ in a similar way to Section 5, we can deduce that $q_1=q_2$. This completes the proof of the corollary.\qed
\subsection{Recovery of non-compactly supported coefficients in a slab}

In this subsection we consider Corollary \ref{c3}. Applying the construction of CGO solutions and the Carleman estimate of the previous sections, we will prove how one can extend the result of \cite{LU} to coefficients supported on an unbounded cylinder. For this purpose, we start by fixing $\delta\in (0,R-r)$ and $\omega$ an open smooth and connected subset of $(0,L)\times\R$ such that $(0,L)\times(-r-\delta,r+\delta)\subset\omega\subset (0,L)\times(-R,R) $. Then, we fix $\Omega:=\omega\times\R$ and   we consider the set of functions
$$\mathcal V_{q}(\Omega):=\{u\in H^1(\Omega):\ -\Delta u+qu=0\ \ \textrm{in }\Omega\},$$
$$\mathcal W_{q}(\mathcal O):=\{u_{|\Omega}:\ u\in H^1(\mathcal O),\ -\Delta u+qu=0\ \ \textrm{in }\mathcal O,\ u_{|x_1=0}=0\},$$
$$\mathcal W_{q}(\Omega):=\{u\in H^1(\Omega):\ -\Delta u+qu=0\ \ \textrm{in }\Omega,\ u_{|{x_1=0}}=0\}.$$
 Following \cite[Lemma 9]{LU}, one can check the following result of density.
\begin{lemma}\label{l7} Let $q\in L^\infty(\Omega)$ be such that $0$ is not in the spectrum of $-\Delta+q$ with Dirichlet boundary condition on $\mathcal O$. Then the set $ \mathcal W_{q}(\mathcal O)$ is dense in $\mathcal W_{q}(\Omega)$ with respect to the topology of $L^2(\Omega)$.\end{lemma}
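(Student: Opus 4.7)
The plan is to argue by duality. By the Hahn--Banach theorem, it suffices to show that whenever $g\in L^2(\Omega)$ satisfies $\int_\Omega g v\, dx=0$ for every $v\in\mathcal W_{q}(\mathcal O)$, one also has $\int_\Omega g u\, dx=0$ for every $u\in\mathcal W_{q}(\Omega)$. The essential idea will be to construct, from such a $g$, a function $y$ on the slab $\mathcal O$ whose vanishing Cauchy data on the face $\{x_1=L\}$ propagate into the exterior region $\mathcal O\setminus\overline{\Omega}$ by unique continuation.

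Given such a $g$, I would extend it by zero to $\tilde g\in L^2(\mathcal O)$ and, using the spectral assumption on $-\Delta+q$ in $\mathcal O$, solve
\[
-\Delta y+qy=\tilde g\ \mbox{in}\ \mathcal O,\quad y_{|x_1=0}=y_{|x_1=L}=0,
\]
obtaining $y\in H^1_0(\mathcal O)\cap H^2_{loc}(\mathcal O)$. Testing this identity against any $\tilde v\in H^1(\mathcal O)$ satisfying $(-\Delta+q)\tilde v=0$ in $\mathcal O$ and $\tilde v_{|x_1=0}=0$, and applying Green's formula together with the orthogonality hypothesis on $g$, I obtain
\[
0=\int_\mathcal O \tilde g\, \tilde v\, dx=-\int_{\{x_1=L\}}(\partial_{x_1}y)\,\tilde v\, dx_2\, dx_3.
\]
Since such $\tilde v$ can be constructed with arbitrary prescribed Dirichlet trace on $\{x_1=L\}$ by solving the mixed problem on $\mathcal O$, these traces form a dense subset of $H^{1/2}(\{x_1=L\})$, and I may conclude $\partial_{x_1}y_{|x_1=L}=0$.

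At this stage $y$ satisfies $(-\Delta+q)y=0$ in $\mathcal O\setminus\overline{\Omega}$ with vanishing Cauchy data on the face $\{x_1=L\}$. Extending $y$ by zero past this face, together with a zero extension of $q$, produces a weak solution of an elliptic equation in a strictly larger domain, so by unique continuation (as in \cite{SS}) the function $y$ must vanish on every connected component of $\mathcal O\setminus\overline{\Omega}$ that touches $\{x_1=L\}$. Using that $\omega$ is connected and satisfies $\omega\subset(0,L)\times(-R,R)$, one readily checks that each connected component of $\mathcal O\setminus\overline{\Omega}$ does reach this face, so $y\equiv 0$ on $\mathcal O\setminus\overline{\Omega}$. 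Consequently $y$ and $\partial_\nu y$ vanish on the interior portion $\partial\Omega\setminus\partial\mathcal O$, while the Dirichlet condition on $\{x_1=0,L\}$ and the identity $\partial_{x_1}y_{|x_1=L}=0$ take care of the remaining pieces of $\partial\Omega$. For any $u\in\mathcal W_q(\Omega)$, one further application of Green's identity on $\Omega$ then yields
\[
\int_\Omega gu\, dx=\int_\Omega(-\Delta y+qy)u\, dx=\int_{\partial\Omega}\bigl[u\,\partial_\nu y-y\,\partial_\nu u\bigr]d\sigma=0,
\]
where the boundary contribution on $\{x_1=0\}\cap\partial\Omega$ disappears thanks to $u_{|x_1=0}=0$.

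The step I expect to be the main obstacle is the unique continuation argument in the unbounded cylindrical exterior $\mathcal O\setminus\overline{\Omega}$. One must analyse the connected components of this set carefully and verify that each of them meets the face $\{x_1=L\}$, and one must justify that the trivial extension of $y$ across $\{x_1=L\}$ is a genuine weak solution on a slightly larger domain so that a standard UCP such as \cite{SS} applies. A secondary technicality is controlling the decay of $y$ and of the various test functions in the $x_3$ direction in order to justify the Green identities, which can be handled by a standard cutoff-and-pass-to-the-limit argument.
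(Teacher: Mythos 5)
Your duality argument (Hahn--Banach, solving the adjoint problem $(-\Delta+q)y=\tilde g$ on $\mathcal O$, killing the Neumann trace on $\{x_1=L\}$ via the well-posed mixed problem, and propagating the vanishing Cauchy data into $\mathcal O\setminus\overline{\Omega}$ by unique continuation) is correct and is exactly the scheme of \cite[Lemma 9]{LU}, which is what the paper invokes for this lemma without writing out a proof; it also mirrors the paper's own proof of Lemma \ref{l5}. The technical points you flag (connectedness of the components of $\mathcal O\setminus\overline{\Omega}$ reaching the face $\{x_1=L\}$, and cutoffs in $x_3$ to justify the Green identities) are the right ones and are handled as you indicate.
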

In the same way, combining Lemma \ref{l7} with the Carleman estimate of Corollary \ref{car1} and \cite[Lemma 10]{LU}, we obtain the following important estimate.
\begin{lemma}\label{l8} Let $\theta:=(\theta_1,\theta_2)\in\mathbb S^1$ be such that $\theta_1>0$ and assume that \eqref{c3a}-\eqref{c3b} are fulfilled. Then we have
\bel{l8a} \begin{aligned}\abs{\int_{\mathcal O}(q_1-q_2)v_1v_2dx}&=\abs{\int_{\Omega}(q_1-q_2)v_1v_2dx}\\
\ &\leq C\rho^{-\frac{1}{2}}(\theta_1)^{-\frac{1}{2}}\left(\int_{\Omega}\abs{e^{-\rho x'\cdot\theta}(q_1-q_2)v_2}dx\right)\left(\int_{\Gamma\cap \{x_1=L\}}\abs{e^{\rho x'\cdot\theta}v_1}d\sigma(x)\right)\end{aligned}\ee
for all $v_1\in \mathcal V_{q_1}(\Omega)$ and for all $v_2\in \mathcal W_{q_2}(\Omega)$.
\end{lemma}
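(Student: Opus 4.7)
The plan is to adapt the Li--Uhlmann integration-by-parts strategy, converting the volume integral on the left of \eqref{l8a} into a boundary integral on the face $\Gamma\cap\{x_1=L\}$ and then dominating it via the Carleman estimate of Corollary \ref{car1}. The first equality in \eqref{l8a} is immediate from \eqref{c3a}, since $q_1-q_2$ vanishes on $\mathcal O\setminus\Omega$. Using the $L^2(\Omega)$-density provided by Lemma \ref{l7}, I may assume $v_2=\tilde u_2|_\Omega$ for some $\tilde u_2\in H^1(\mathcal O)$ solving $(-\Delta+q_2)\tilde u_2=0$ in $\mathcal O$ with $\tilde u_2|_{x_1=0}=0$, and pass to the limit at the end. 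I then introduce $\tilde u_1\in H^1(\mathcal O)$ solving $(-\Delta+q_1)\tilde u_1=0$ in $\mathcal O$ with the same Dirichlet data as $\tilde u_2$, and set $w:=\tilde u_1-\tilde u_2$, so that $w$ satisfies $(-\Delta+q_1)w=(q_2-q_1)\tilde u_2$ in $\mathcal O$, $w|_{x_1=0}=w|_{x_1=L}=0$, and $\partial_{x_1}w=0$ on $\partial\mathcal O_{-,R}$ by the DN map equality \eqref{c3b}.

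To localize the support of $\partial_\nu w$ on $\partial\Omega$ to the single face $\Gamma\cap\{x_1=L\}$, I exploit \eqref{c3a}: on each of the two connected components of $\mathcal O\cap\{|x_2|>r\}$, $w$ is harmonic and has vanishing Cauchy data on the nonempty open subset $\{x_1=0,\ r<|x_2|<R\}\times\R$ of the respective boundary. Real analyticity of harmonic functions on a connected open set then gives $w\equiv 0$ on $\mathcal O\cap\{|x_2|>r\}$. The geometric assumption $\omega\supset(0,L)\times(-r-\delta,r+\delta)$ ensures the curved portion of $\partial\omega$ lies inside $\{|x_2|\geq r+\delta\}$, hence inside the zero set of $w$; combined with $w|_{x_1=0,L}=0$, this gives $w|_{\partial\Omega}=0$ and $\partial_\nu w=0$ on $\partial\Omega\setminus(\Gamma\cap\{x_1=L\})$. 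Since $(q_2-q_1)\tilde u_2\in L^2(\Omega)$, elliptic regularity gives $w\in H^2(\Omega)$, and Green's identity against any $v_1\in\mathcal V_{q_1}(\Omega)$ yields
$$\int_\Omega (q_2-q_1)\tilde u_2\,v_1\,dx=\int_{\Gamma\cap\{x_1=L\}} v_1\,\partial_\nu w\,d\sigma.$$

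To bound the boundary integral, I write $v_1\,\partial_\nu w=(e^{\rho\theta\cdot x'}v_1)(e^{-\rho\theta\cdot x'}\partial_\nu w)$ and apply Corollary \ref{car1} to $w$; using that $\theta\cdot\nu=\theta_1>0$ on $\{x_1=L\}$ while $\partial_\nu w$ vanishes elsewhere on $\partial\Omega$, the estimate reduces to
$$\rho\,\theta_1\,\|e^{-\rho\theta\cdot x'}\partial_\nu w\|_{L^2(\Gamma\cap\{x_1=L\})}^2\leq\|e^{-\rho\theta\cdot x'}(q_2-q_1)\tilde u_2\|_{L^2(\Omega)}^2.$$
Cauchy--Schwarz on the boundary integral then gives the bound in $L^2$ norms; the $L^1$ form in \eqref{l8a} is recovered by exploiting the boundedness in $x'$ of both the support of $q_1-q_2$ (from \eqref{c3a}) and of $\Gamma\cap\{x_1=L\}$ in the $x_2$-direction, which permits passage between $L^1$ and $L^2$ on the relevant compactly supported slices at the cost of a constant absorbed into $C$. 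The main obstacle I foresee is the unique continuation step and verifying that the chosen geometry of $\omega$ does place every curved point of $\partial\omega$ inside the region where $w\equiv 0$; this is precisely what reduces the Green/Carleman argument to a single face on which the Carleman weight can be effectively exploited.
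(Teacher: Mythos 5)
Your proposal is correct in substance and follows exactly the route the paper intends: the paper gives no self-contained proof of Lemma \ref{l8}, but simply invokes Lemma \ref{l7}, Corollary \ref{car1} and \cite[Lemma 10]{LU}, and the argument of that cited lemma is precisely your chain (density, difference $w=\tilde u_1-\tilde u_2$ with vanishing Cauchy data on $\{x_1=0,\ r<|x_2|<R\}$ from \eqref{c3b}, unique continuation for the harmonic function $w$ on $\mathcal O\cap\{|x_2|>r\}$ using \eqref{c3a} to kill the curved part of $\partial\Omega$, Green's identity reducing to the face $x_1=L$, then Carleman). The one step that does not survive scrutiny is the last one: you cannot pass from the $L^2$ bounds that Cauchy--Schwarz and Corollary \ref{car1} deliver to the $L^1$ expressions appearing in \eqref{l8a}, since an $L^2$ norm is not controlled by an $L^1$ norm even on sets of finite measure ($v_1,v_2$ need not be bounded), and the unboundedness in $x_3$ makes the "compactly supported slices" remark moot. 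However, the printed form of \eqref{l8a} is evidently a misprint for the $L^2$ version (note the prefactor $\rho^{-\frac{1}{2}}\theta_1^{-\frac{1}{2}}$, the square root of the Carleman constant, and the fact that the proof of Corollary \ref{c3} only uses the $L^2$-type decay as in Section 5); the estimate you actually derive, with $\norm{e^{-\rho x'\cdot\theta}(q_1-q_2)v_2}_{L^2(\Omega)}$ and $\norm{e^{\rho x'\cdot\theta}v_1}_{L^2(\Gamma\cap\{x_1=L\})}$ on the right, is the statement that is both provable and sufficient for the application.
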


Armed with these two results, we will complete the proof of Corollary \ref{c3} by choosing suitably the solutions $v_j$, $j=1,2$, of the equation $-\Delta v_j+q_jv_j=0$ in $\Omega$. \\
\ \\
\textbf{Proof of Corollary \ref{c3}.} From now on we assume that the condition \eqref{c3b} is fulfilled. Let us first start by considering the set $\tilde{\omega}:=\{x:=(x_1,x_2,x_3):\ (-x_1,x_2,x_3)\in \omega\}\cup \omega$ and let us extend $q_2$ by symmetry to $\tilde{\omega}\times\R$ by assuming that $$q_2(-x_1,x_2,x_3)=q_2(x_1,x_2,x_3),\quad (x_1,x_2,x_3)\in\tilde{\omega}\times\R.$$
Applying the results of Section 2, we can consider $u_2\in H^2(\tilde{\omega}\times\R)$ solving $-\Delta u_2+q_2u_2=0$ in $\tilde{\omega}\times\R$ and taking the form
\bel{c3c}u_2(x):=e^{\rho \theta\cdot x'}\left(e^{-i\rho \eta\cdot x}\chi\left(\rho^{-\frac{1}{4}}x_3\right)e^{-ix\cdot\xi}+w_{2,\rho}(x)\right),\quad x:=(x',x_3)\in\tilde{\omega}\times\R,\ee
with $\theta:=(\theta_1,\theta_2)\in\mathbb S^1$  such that $\theta_1>0$, $\eta,\xi\in\R^3$ chosen in a similar way to the beginning of Section 2, and $w_{2,\rho}\in H^2(\tilde{\omega}\times\R)$ satisfying
\bel{c3d} \rho^{-1}\norm{w_{2,\rho}}_{H^2(\tilde{\omega}\times\R)}+\rho\norm{w_{2,\rho}}_{L^2(\tilde{\omega}\times\R)}\leq C\rho^{\frac{7}{8}}.\ee
Then, we fix $v_2\in H^2(\Omega)$ defined by
\bel{c3e} v_2(x_1,x_2,x_3):=u_2(x_1,x_2,x_3)-u_2(-x_1,x_2,x_3),\quad (x_1,x_2,x_3)\in\Omega.\ee
It is clear that $v_2\in \mathcal W_{q_2}(\Omega)$. In the same way, we fix $v_1\in \mathcal V_{q_1}(\Omega)$
\bel{c3f}v_1(x):=e^{-\rho \theta\cdot x'}\left(e^{i\rho \eta\cdot x}\chi\left(\rho^{-\frac{1}{4}}x_3\right)+w_{1,\rho}(x)\right),\quad x:=(x',x_3)\in\Omega,\ee
with $w_{1,\rho}\in H^2(\Omega)$ satisfying
\bel{c3g} \rho^{-1}\norm{w_{1,\rho}}_{H^2(\Omega)}+\rho\norm{w_{1,\rho}}_{L^2(\Omega)}\leq C\rho^{\frac{7}{8}}.\ee
Applying \eqref{l8a}-\eqref{c3g} and the fact that $q_1-q_2\in L^\infty(\Omega)\cap L^1(\Omega)\subset L^2(\Omega)$, in a similar way to Section 5 we deduce that
$$\lim_{\rho\to+\infty}\int_{\Omega}(q_1-q_2)v_1v_2dx=0.$$
On the other hand, we have
\bel{c3h}\int_{\Omega}(q_1-q_2)v_1v_2dx
=\int_{\Omega}(q_1-q_2)\chi^2\left(\rho^{-\frac{1}{4}}x_3\right)e^{-ix\cdot\xi}dx+\int_{\Omega} X_\rho dx,\ee
where
$$\begin{aligned}X_\rho:=& (q_1-q_2)e^{-\rho \theta\cdot x'}u_2w_{1,\rho}\\
&-(q_1-q_2)e^{-2\rho \theta_1x_1}\left(e^{i\rho \eta\cdot x}\chi\left(\rho^{-\frac{1}{4}}x_3\right)+w_{1,\rho}(x)\right)\left(e^{-i\rho \eta\cdot s(x)}\chi\left(\rho^{-\frac{1}{4}}x_3\right)e^{-ix\cdot\xi}+w_{2,\rho}(s(x))\right),\end{aligned}$$
with $s(x_1,x_2,x_3)=(-x_1,x_2,x_3)$. Combining this with the decay estimates \eqref{c3d}, \eqref{c3g} and using the fact that $\theta_1x_1>0$, we deduce that
$$\lim_{\rho\to+\infty}\int_{\Omega} X_\rho dx=0.$$
Then, \eqref{c3h} and the fact that $q_1-q_2\in L^1(\Omega)$ imply that
$$\int_{\Omega}(q_1-q_2)e^{-ix\cdot\xi}dx=0$$
and following the arguments used at the end of the proof of Theorem \ref{t1} we deduce that $q_1=q_2$. \qed
\section*{Acknowledgment}
The author would like to thank Gunther Uhlmann for his suggestions and remarks.


\begin{thebibliography} {[10]}
\frenchspacing \baselineskip=12 pt plus 1pt minus 1pt
\bibitem{AK} {\sc D. Ahluwalia and J. Keller}, {\em Exact and asymptotic representations of the sound field in a stratified ocean. Wave propagation and underwater acoustics}, Lecture Notes in Phys., Springer,
Berlin, \textbf{70} (1977), 14-85.
\bibitem{AU} {\sc H. Ammari and G. Uhlmann}, {\em Reconstuction from partial Cauchy data for the
Schr\"odinger equation}, Indiana University Math J., \textbf{53} (2004), 169-184.
\bibitem{BKS} {\sc M. Bellassoued, Y. Kian, E. Soccorsi}, {\em An inverse stability result for non compactly supported potentials by one arbitrary lateral Neumann observation},
J. Diff. Equat., \textbf{260} (2016), 7535-7562.  
 \bibitem{BKS1} {\sc M. Bellassoued, Y. Kian, E. Soccorsi}, {\em An inverse problem for the magnetic Schr\"odinger equation in infinite cylindrical domains}, preprint, 	arXiv:1605.06599.
\bibitem{B} {\sc A. Bukhgeim}, {\em Recovering the potential from Cauchy data in two dimensions}, J. Inverse
Ill-Posed Probl., 16 (2008), 19-34.
\bibitem{BU}{\sc A. L. Bukhgeim and G. Uhlmann}, {\em Recovering a potential from partial  Cauchy data}, Commun. Partial Diff. Eqns., \textbf{27} (2002), no 3-4, 653-668.
 \bibitem{Ca}{\sc A. P. Calder\'on}, {\em On an inverse boundary value problem}, Seminar on Numerical Analysis
and its Applications to Continuum Physics, Rio de Janeiro, Sociedade Brasileira
de Matematica, (1980), 65-73.
\bibitem{CDR1} {\sc P. Caro, D. Dos Santos Ferreira, A. Ruiz}, {\em Stability estimates for the Radon transform with restricted data and applications}, Advances in Math., {\bf 267} (2014), 523-564.
\bibitem{CDR2} {\sc P. Caro, D. Dos Santos Ferreira, A. Ruiz}, {\em Stability estimates for the Calder\'on problem with partial data},  J. Diff. Equat., \textbf{260} (2016), 2457-2489.
\bibitem{CM}{\sc P. Caro and K. Marinov}, {\em Stability of inverse problems in an infinite slab with partial data}, Commun. Partial Diff. Eqns., \textbf{41} (2016), 683-704.
\bibitem{CL} {\sc P.-Y. Chang, H.-H. Lin}, {\em Conductance through a single impurity in the metallic zigzag carbon nanotube}, Appl. Phys. Lett., {\bf 95} (2009), 082104.
\bibitem{Ch}{\sc M. Choulli}, {\em Une introduction aux probl\`emes inverses elliptiques et paraboliques}, Math\'ematiques et Applications, Vol. 65, Springer-Verlag, Berlin, 2009.
\bibitem{CK}{\sc M. Choulli and Y. Kian}, {\em Logarithmic stability in determining the time-dependent zero order coefficient in a parabolic equation from a partial Dirichlet-to-Neumann map. Application to the determination of a nonlinear term}, preprint, 	arXiv:1605.08672.
\bibitem{CKS}{\sc M. Choulli, Y. Kian, E. Soccorsi}, {\em Stable determination of time-dependent scalar potential from boundary measurements in a periodic quantum waveguide}, SIAM J. Math. Anal., {\bf 47} (2015), no 6, 4536-4558.
\bibitem{CKS1}{\sc M. Choulli, Y. Kian, E. Soccorsi}, {\em Double logarithmic stability estimate in the identification of a scalar potential by a partial elliptic Dirichlet-to-Neumann map}, Bulletin of the South Ural State University, Ser. Mathematical Modelling, Programming and Computer Software (SUSU MMCS), {\bf 8} (2015), no 3, 78-95.
\bibitem{CKS2}{\sc M. Choulli, Y. Kian, E. Soccorsi},  {\em Stability result for elliptic inverse periodic coefficient problem by partial Dirichlet-to-Neumann map},   to appear in J. Spec. Theory, arXiv:1601.05355.
\bibitem{CKS3}{\sc M. Choulli, Y. Kian, E. Soccorsi},  {\em On the Calder\'on problem in periodic cylindrical domain with partial Dirichlet and Neumann data},   to appear in MMAS,  arXiv:1601.05358.
\bibitem{CS}{\sc M. Choulli and E. Soccorsi}, {\em An inverse anisotropic conductivity problem induced by twisting a homogeneous cylindrical domain},
J. Spec. Theory, \textbf{5} (2015), 295-329. 
\bibitem{Ho2}{\sc L. H\"ormander}, {\em The Analysis of linear partial differential operators}, Vol II, Springer-Verlag, Berlin, Heidelberg, 1983.
\bibitem{HK}{\sc G. Hu and Y. Kian}, {\em Determination of singular time-dependent coefficients for wave equations from full and partial data}, 	preprint, arXiv:1706.07212.
\bibitem{IUY1} {\sc O. Imanuvilov, G. Uhlmann, M. Yamamoto}, {\em The Calder\'on problem with partial data
in two dimensions}, Journal American Math. Society, \textbf{23} (2010), 655-691.
\bibitem{IUY2} {\sc O. Imanuvilov, G. Uhlmann, M. Yamamoto}, {\em Partial Cauchy data for general second
order elliptic operators in two dimensions}, Publ. Research Institute Math. Sci., \textbf{48 } (2012), 971-1055.
\bibitem{Ik} {\sc M. Ikehata}, {\em Inverse conductivity problem in the infinite slab}, Inverse Problems, {\bf 17} (2001), 437-454.
\bibitem{I}{\sc V. Isakov}, {\em Completness of products of solutions and some inverse problems for PDE}, J. Diff. Equat., \textbf{92} (1991), 305-316.
\bibitem{KBF} {\sc C. Kane, L. Balents, M. P. A. Fisher}, {\em Coulomb Interactions and Mesoscopic Effects in Carbon Nanotubes}, Phys. Rev. Lett., {\bf 79} (1997), 5086-5089.
\bibitem{KKS} {\sc O. Kavian, Y. Kian, E. Soccorsi}, {\em Uniqueness and stability results for an inverse spectral problem in a
  periodic waveguide}, Jour. Math. Pures Appl., {\bf 104} (2015), no. 6, 1160-1189.
\bibitem{KSU} {\sc C.E. Kenig, J. Sj\"ostrand, G. Uhlmann}, {\em The Calderon problem with partial data}, Ann. of Math., {\bf 165} (2007), 567-591.
\bibitem{Ki1}{\sc Y. Kian}, {\em Stability of the determination of a coefficient for wave equations in an infinite waveguide}, Inverse Probl. Imaging, \textbf{8} (3) (2014),  713-732.
\bibitem{Ki2}{\sc Y. Kian}, {\em Unique determination of a time-dependent potential for  wave equations from partial data}, Annales de l'IHP (C) Nonlinear Analysis, \textbf{34} (2017), 973-990.
 \bibitem{Ki3}{\sc Y. Kian}, {\em Recovery of time-dependent damping coefficients and potentials appearing in wave equations from partial data}, SIAM J. Math. Anal., \textbf{48} (6) (2016), 4021-4046.
\bibitem{KO}{\sc Y. Kian and L. Oksanen}, {\em Recovery of time-dependent coefficient on Riemanian manifold for hyperbolic equations}, preprint, 	arXiv:1606.07243.
\bibitem{KPS1}{\sc Y. Kian, Q. S. Phan, E. Soccorsi}, {\em Carleman estimate for infinite cylindrical quantum domains and application to inverse problems},
Inverse Problems, {\bf 30}, 5 (2014), 055016. 
\bibitem{KPS2}{\sc Y. Kian, Q. S. Phan, E. Soccorsi}, {\em H\"older stable determination of a quantum scalar potential in unbounded cylindrical domains},
Jour. Math. Anal. Appl., {\bf 426}, 1 (2015), 194-210. 
\bibitem{KLU} {\sc K. Krupchyk, M. Lassas, G. Uhlmann}, {\em Inverse Problems with Partial Data for a Magnetic Schr\"odinger Operator in an Infinite Slab or Bounded Domain},
Comm. Math. Phys., {\bf 312} (2012), 87-126.
 \bibitem{L1}{\sc X. Li }, {\em Inverse boundary value problems with partial data in unbounded domains}, Inverse Problems, \textbf{28} (2012), 085003.
 \bibitem{L2}{\sc X. Li }, {\em Inverse problem for Schr\"odinger equations with Yang-Mills potentials in a slab}, J. Diff. Equat., \textbf{253} (2012), 694-726.
\bibitem{LU}{\sc X. Li and G. Uhlmann}, {\em Inverse Problems on a Slab}, Inverse Problems and Imaging, {\bf 4} (2010), 449-462.
\bibitem{NS} {\sc A. Nachman and B. Street}, {\em Reconstruction in the Calder\'on problem with partial data}, Commun. Partial Diff. Eqns., {\bf35} (2010), 375-390.
\bibitem{Pot1}{\sc  L. Potenciano-Machado},  {\em Stability estimates for a Magnetic Schrodinger operator with partial data}, preprint, 	arXiv:1610.04015.
\bibitem{Pot2}{\sc  L. Potenciano-Machado},  {\em Optimal stability estimates for a Magnetic Schr\"odinger operator with local data}, Inverse Problems, 	\textbf{33} (2017), 095001.
\bibitem{SW} {\sc M. Salo and J. N. Wang}, {\em Complex spherical waves and inverse problems in unbounded
domains}, Inverse Problems, {\bf 22} (2006), 2299-2309.
\bibitem{SS} {\sc J. C. Saut and B. Scheurer}, {\em Sur l'unicit\'e du probl`eme de Cauchy et le prolongement
unique pour des \'equations elliptiques \`a coefficients non localement
born\'es}, J. Diff. Equat., \textbf{43} (1982), 28-43.
\bibitem{SU} {\sc J. Sylvester and G. Uhlmann}, {\em A global uniqueness theorem for an inverse boundary value problem}, Ann. of Math., {\bf 125} (1987), 153-169.
\bibitem{Y} {\sc Y. Yang}, {\em Determining the first order perturbation of a bi-harmonic operator on bounded and unbounded domains from partial data},  J. Diff. Equat., \textbf{257} (2014), 3607-3639.



\end{thebibliography}
\end{document}